\numberwithin{equation}{section}
\numberwithin{figure}{section}
\theoremstyle{plain}
\newtheorem{theorem}{Theorem}[section]
\newtheorem{proposition}[theorem]{Proposition}
\theoremstyle{definition}
\newtheorem*{rh-pb*}{Basic RH problem}
\theoremstyle{remark}
\newtheorem{remark}[theorem]{Remark}
\newtheorem*{back*}{From $M^R$ back to $\tilde u$}
\newtheorem*{notations*}{Notations}
\newtheorem*{stepone*}{Step \rm{(i)}}
\newtheorem*{steptwo*}{Step \rm{(ii)}}
\newtheorem*{stepthree*}{Step \rm{(iii)}}
\newtheorem*{stepfour*}{Step \rm{(iv)}}
\newtheorem*{stepfive*}{Step \rm{(v)}}
\providecommand{\D}[1]{\mathbb{#1}}
\providecommand{\C}[1]{\mathcal{#1}}
\newcommand{\dd}{\mathrm{d}}
\newcommand{\eul}{\mathrm{e}}
\newcommand{\ii}{\mathrm{i}}
\providecommand{\accol}[1]{\lbrace#1\rbrace}
\renewcommand{\Im}{\operatorname{Im}}
\renewcommand{\Re}{\operatorname{Re}}
\newcommand{\ord}{\mathrm{O}}
\newcommand{\osmall}{\mathrm{o}}
\DeclareMathOperator{\Res}{Res}
\begin{document}
\title[Large-time asymptotics for the mCH equation]{The modified Camassa-Holm equation on a nonzero background: large-time asymptotics for the Cauchy problem}
\author[A.~Boutet de Monvel]{Anne Boutet de Monvel}
\address{Institut de Math\'ematiques de Jussieu-Paris Rive Gauche,
Universit\'e de Paris,
8 place Aur\'elie Nemours, case 7012,
75205 Paris Cedex 13,
France}
\email{anne.boutet-de-monvel@imj-prg.fr}
\author[I.~Karpenko]{Iryna Karpenko}
\address{B.I.~Verkin Institute for Low Temperature Physics and Engineering,
47 Nauky Avenue, 61103 Kharkiv, Ukraine}
\email{inic.karpenko@gmail.com}
\author[D.~Shepelsky]{Dmitry Shepelsky}
\address{B.I.~Verkin Institute for Low Temperature Physics and Engineering,
47 Nauky Avenue, 61103 Kharkiv, Ukraine\\
V.N.~Karazin Kharkiv National University, 4 Svobody Square, 61022 Kharkiv, Ukraine}
\email{shepelsky@yahoo.com}
\subjclass[2010]{Primary: 35Q53; Secondary: 37K15, 35Q15, 35B40, 35Q51, 37K40}
\keywords{modified Camassa--Holm equation, Riemann--Hilbert problem, large-time asymptotics}
\begin{abstract}
This paper deals with the Cauchy problem for the modified Camassa-Holm (mCH) equation
\begin{alignat*}{4}
&m_t+\left((u^2-u_x^2)m\right)_x=0,&\quad&m\coloneqq u-u_{xx},&\quad&t>0,&\;&-\infty<x<+\infty,\\
&u(x,0)=u_0(x),&&&&&&-\infty<x<+\infty,
\end{alignat*}
in the case when the initial data $u_0(x)$ as well as the solution $u(x,t)$ are assumed to approach a nonzero constant as $x\to\pm\infty$. In a recent paper we developed the Riemann--Hilbert formalism for this problem, which allowed us to represent the solution of the Cauchy problem in terms of the solution of an associated Riemann--Hilbert factorization problem. In this paper, we apply the nonlinear steepest descent method, based on this Riemann--Hilbert formalism, to study the large-time asymptotics of the solution of this Cauchy problem. We present the results of the asymptotic analysis in the solitonless case for the two sectors $\frac{3}{4}<\frac{x}{t}<1$ and $1<\frac{x}{t}<3$ (in the $(x,t)$ half-plane, $t>0$), where the leading asymptotic term of the deviation of the solution from the background is nontrivial: this term is given by modulated (with parameters depending on $\frac{x}{t}$), decaying (as $t^{-1/2}$) trigonometric oscillations.
\end{abstract}
\maketitle
\section{Introduction}\label{sec:1}

In the present paper, we consider the initial value problem
for the modified Camassa--Holm (mCH) equation:
\begin{subequations}\label{mCH1-ic}
\begin{alignat}{4}           \label{mCH-1-ic}
&m_t+\left((u^2-u_x^2)m\right)_x=0,&\quad&m\coloneqq u-u_{xx},&\quad&t>0,&\;&-\infty<x<+\infty,\\
&u(x,0)=u_0(x),&&&&&&-\infty<x<+\infty\label{IC},
\end{alignat}
\end{subequations}
assuming that $u_0(x)\to 1$ as $x\to\pm\infty$ and that the time evolution preserves this behavior: $u(x,t)\to 1$ as $x\to\pm\infty$ for all $t>0$. We are interested in the study of the behavior of $u(x,t)$ as $t\to+\infty$. 

Equation \eqref{mCH-1-ic} is an integrable modification, with cubic nonlinearity, of the Camassa--Holm (CH) equation \cites{CH93,CHH94}
\begin{equation}\label{CH-1}
m_t+\left(u m\right)_x + u_x m=0,\quad m\coloneqq u-u_{xx}.
\end{equation}
The  Camassa--Holm equation has been studied intensively over the two decades, due to its rich mathematical structure as well as applications for modeling the unidirectional propagation of shallow water waves over a flat bottom \cites{J02,CL09}. The CH and mCH equations are both integrable in the sense that they have Lax pair representations, which allows to develop the inverse scattering method, in one form or another, to study the properties of solutions of initial (Cauchy) and initial boundary value problems for these equations. In particular, the inverse scattering method in the form of a Riemann--Hilbert (RH) problem developed for the CH equation with linear dispersion \cite{BS08} allowed to study the large-time behavior of solutions of initial as well as initial boundary value problems for the CH equation \cites{BS08-2,BKST09,BS09,BIS10} using the (appropriately adapted) nonlinear steepest descent method \cite{DZ93}.

Over the last few years various modifications and generalizations of the CH equation have been introduced, see, e.g., \cite{YQZ18} and references therein. Novikov \cite{N09} applied the perturbative symmetry approach in order to classify integrable equations of the form
\[
\left(1-\partial_x^2\right) u_t=F(u, u_x, u_{xx}, u_{xxx}, \dots),\qquad u=u(x,t), \quad \partial_x=\partial/\partial x,
\]
assuming that $F$ is a homogeneous differential polynomial over $\D{C}$, quadratic or cubic in $u$ and its $x$-derivatives (see also \cite{MN02}). In the list of equations presented in \cite{N09}, equation (32), which was the second equation with \emph{cubic} nonlinearity, had the form \eqref{mCH-1-ic}. In an equivalent form, this equation was given by Fokas in \cite{F95} (see also \cite{OR96} and \cite{Fu96}). Shiff \cite{S96} considered equation \eqref{mCH-1-ic} as a dual to the modified Korteweg--de Vries equation (mKdV) and introduced a Lax pair for \eqref{mCH-1-ic} by rescaling the entries of the spatial part of a Lax pair for the mKdV equation. An alternative (in fact, gauge equivalent) Lax pair for \eqref{mCH-1-ic} was given by Qiao \cite{Q06}, so the mCH equation is also referred to as the Fokas--Olver--Rosenau--Qiao (FORQ) equation \cite{HFQ17}.

Equation \eqref{mCH-1-ic} belongs to the class of peakon equations: it has solutions in the form of localized, peaked traveling waves -- peakons \cite{GLOQ13}. The dynamical stability of peakons is discussed in \cite{QLL13} (see also \cite{LLOQ14} for the stability of peakons of a generalized mCH equation). Multipeakon solutions are discussed in \cite{CS18} using the inverse spectral method for an associated peakon system of ordinary differential equations. 

The local well-posedness and wave-breaking mechanisms for the mCH equation and its generalizations, particularly, the mCH equation with linear dispersion, are discussed in \cites{GLOQ13,FGLQ13,LOQZ14,CLQZ15,CGLQ16}. Algebro-geometric quasiperiodic solutions are studied in \cite{HFQ17}. The local well-posedness for classical solutions and global weak solutions to \eqref{mCH-1-ic} in Lagrangian coordinates are discussed in \cite{GL18}.

The Hamiltonian structure and Liouville integrability of peakon systems are discussed in \cites{AK18,OR96,GLOQ13,CS17}. In \cite{K16}, a Liouville-type transformation was presented relating the isospectral problems for the mKdV equation and the mCH equation, and a Miura-type map from the mCH equation to the CH equation was introduced. The B\"acklund transformation for the mCH equation and a related nonlinear superposition formula are presented in \cite{WLM20}.

In the case of the CH equation, the inverse scattering transform method (particularly, in the form of a Riemann--Hilbert factorization problem) works for the version of this equation, considered for functions decaying at spatial infinity, with a linear dispersion term added to \eqref{CH-1} or, equivalently, when \eqref{CH-1} is considered on a nonzero background. This is because the inverse scattering method requires that the spatial equation from the Lax pair associated to the CH equation have continuous spectrum. On the other hand, the asymptotic analysis of the dispersionless CH equation \eqref{CH-1} on zero background (where the spectrum is purely discrete) requires a different tool (although having a certain analogy with the Riemann--Hilbert method), namely the analysis of a coupling problem for entire functions \cites{E19,ET13,ET16}.

In the case of the mCH equation, the situation is similar: the inverse scattering method for the Cauchy problem can be developed when equation \eqref{mCH-1-ic} is considered on a nonzero background. The Riemann--Hilbert formalism for this problem has been developed in \cite{BKS20}.

In the present paper, we study the large-time behavior of the solution
of the Cauchy problem for the mCH equation on a nonzero background \eqref{mCH1-ic}, taking the formalism developed in \cite{BKS20} as the starting point. Focusing on the solitonless case, in Section \ref{sec:2} we reduce the original (singular) RH problem representation for the solution of \eqref{mCH1-ic} to the resolution of a regular RH problem. Then, in Section \ref{s:as-reg}, the latter problem is analyzed asymptotically, as $t\to+\infty$. We finally obtain the leading asymptotic terms for the solution of the Cauchy problem \eqref{mCH1-ic}, in the two sectors of the $(x,t)$ half-plane, $1<\frac{x}{t}<3$ and $\frac{3}{4}<\frac{x}{t}<1$ where the deviation from the background value is nontrivial. In those sectors this deviation exhibits slowly decaying (of order $t^{-1/2}$), modulated (by $\frac{x}{t}$) oscillations (Theorems~\ref{th2} and \ref{th4}), while in the remaining sectors $\frac{x}{t}>3$ and $\frac{x}{t}<\frac{3}{4}$ it decays rapidly to $0$.

\begin{notations*}
Furthermore, $\sigma_1\coloneqq\left(\begin{smallmatrix}0&1\\1&0\end{smallmatrix}\right)$ and $\sigma_3\coloneqq\left(\begin{smallmatrix}1&0\\0&-1\end{smallmatrix}\right)$ denote the standard Pauli matrices. We let $\D{C}_+=\accol{\Im\mu>0}$ and $\D{C}_-=\accol{\Im\mu<0}$ denote the open upper and lower complex half-planes. We also let $f^*(\mu)\coloneqq\overline{f(\bar\mu)}$ denote the Schwarz conjugate of a function $f(\mu)$, $\mu\in\D{C}$. If $M$ is a $2\times 2$ matrix we denote by $M^{(1)}$ and $M^{(2)}$ its first and second columns, respectively. 
\end{notations*}

\section{Reduction to a regular RH problem}\label{sec:2}

Introducing a new function $\tilde u$ by
\begin{equation}\label{utilde}
u(x,t)=\tilde u(x-t,t)+1,
\end{equation}
the mCH equation \eqref{mCH-1-ic} reduces to
\begin{subequations}\label{mCH2}
\begin{align}\label{mCH-2}
&\tilde m_t+\left(\tilde\omega\tilde m\right)_x=0,\\
\label{tm}
&\tilde m\coloneqq\tilde u-\tilde u_{xx}+1,\\
\label{tom}
&\tilde\omega\coloneqq\tilde u^2-\tilde u_x^2+2\tilde u,
\end{align}
\end{subequations}
where the solution $\tilde u$ is considered on zero background: $\tilde u(x,t)\to 0$ as $x\to\pm\infty$ for all $t\geq 0$. The Riemann--Hilbert (RH) approach for the Cauchy problem for equation \eqref{mCH2} has recently been developed in \cite{BKS20}. This resulted in a parametric representation for $\tilde u(x,t)$ in terms of the solution of an appropriate RH problem proposed in \cite{BKS20},
according to the following algorithm:
\begin{enumerate}[(a)]
\item
Given $u_0(x)$, construct the ``reflection coefficient'' $r(\mu)$, $\mu \in\D{R}$ and, if applicable, the ``discrete spectrum data'' $\{\mu_j,\rho_j\}_{j=1}^N$,	by solving the Lax pair equations associated with \eqref{mCH2}, whose coefficients are determined in terms of $u_0(x)$.
\item
Construct the jump matrix $J(y,t,\mu)$, $\mu\in\D{R}$ by
\begin{equation}\label{J-J0}
J(y,t,\mu)\coloneqq\eul^{-p(y,t,\mu)\sigma_3}J_0(\mu)\eul^{ p(y,t,\mu)\sigma_3}
\end{equation}
where
\begin{equation}\label{p-y}
p(y,t,\mu)\coloneqq -\frac{\ii(\mu^2-1)}{4\mu}\left(-y+\frac{8\mu^2}{(\mu^2+1)^2}t\right)
\end{equation}
and
$J_0(\mu)$ is defined by
\begin{equation}\label{J0}
J_0(\mu)\coloneqq\begin{pmatrix}
1-r(\mu)r^*(\mu)&r(\mu)\\-r^*(\mu)&1
\end{pmatrix}.
\end{equation}
\item
Solve the following \textbf{RH problem} (parametrized by $y$ and $t$):
Find a piece-wise (w.r.t.~$\D{R}$) meromorphic (in the complex variable $\mu$), $2\times 2$-matrix valued function $M(y,t,\mu)$ satisfying the following conditions:
\begin{enumerate}[\textbullet]
\item
The jump condition
\begin{equation}\label{jump-y}
M_+(y,t,\mu)=M_-(y,t,\mu) J(y,t,\mu),\qquad\mu\in\D{R},\quad\mu\neq\pm 1.
\end{equation}
\item
The residue conditions
\begin{equation}\label{res-M-hat}
\begin{split}
\Res_{\mu_j}M^{(1)}(y,t,\mu)&=\frac{1}{\varkappa_j(y,t)} M^{(2)}(y,t,\mu_j),\\
\Res_{\bar\mu_j} M^{(2)}(y,t,\mu)&=\frac{1}{\overline{\varkappa_j}(y,t)} M^{(1)}(y,t,\overline{\mu_j}),
\end{split}
\end{equation}
with $\varkappa_j(y,t)\coloneqq\rho_j\eul^{-2p(y,t,\mu_j)}$.
\item
The normalization condition
\begin{equation}\label{normalization}
M(y,t,\mu)\to I\text{ as }\mu\to\infty.
\end{equation}
\item
The symmetries
\begin{equation}\label{sym-M-hat}
M(\mu)=\overline{M(\bar\mu^{-1})}=\sigma_3 \overline{M(-\bar\mu)}\sigma_3=\sigma_1\overline{M(\bar\mu)}\sigma_1,
\end{equation}
where $M(\mu)\equiv M(y,t,\mu)$.
\item
The singularity conditions
\begin{subequations}\label{sing-M-hat}
\begin{alignat}{3}\label{sing-M-hat-a}
 M(y,t,\mu)&=\frac{\ii\alpha_+(y,t)}{2(\mu -1)}\begin{pmatrix} -c & 1 \\ -c & 1 \end{pmatrix}+\ord(1)&\quad&\text{as }\mu\to 1,&\;&\Im\mu>0,\\
\label{sing-M-hat-b}
 M(y,t,\mu)&=-\frac{\ii\alpha_+(y,t)}{2(\mu+1)}\begin{pmatrix}c&1\\ -c & -1 \end{pmatrix}+\ord(1)&&\text{as }\mu\to -1,&&\Im\mu> 0,
\end{alignat}
\end{subequations}
where $c=1+r(1)$ (generically, $c=0$) whereas $\alpha_+(y,t)\in\D{R}$ is not specified.
\end{enumerate}
\item
Having found the solution $M(y,t,\mu)$ of this RH problem (which is unique, if it exists, see \cite{BKS20}), extract the real-valued functions $a_j(y,t)$, $j=1,2,3$ from the expansion of $M(y,t,\mu)$ at $\mu=\ii$:
\begin{equation}\label{hat-M-i}
M(y,t,\mu)=\begin{pmatrix}
                  a_1(y,t) & 0 \\
                  0 & a_1^{-1}(y,t)
\end{pmatrix}
+\begin{pmatrix}
                  0 & a_2(y,t) \\
                  a_3(y,t) & 0
\end{pmatrix}(\mu-\ii)+\ord((\mu-\ii)^2), \quad \mu\to\ii.
\end{equation}
\item
Obtain $\tilde u(x,t)$ in parametric form as follows:
\[
\tilde u(x,t)=\hat u(y(x,t),t),
\]
where
\begin{equation}\label{u-and-x}
\begin{split}
\hat u(y,t)&=-a_2(y,t)a_1(y,t)-a_3(y,t)a_1^{-1}(y,t),\\
x(y,t)&=y + 2\ln a_1(y,t).
\end{split}
\end{equation}
\end{enumerate}

\begin{remark}
To simplify notations in this paper, compared to \cite{BKS20}, we have removed the symbol ``hat'' over many functions (e.g., $M(y,t,\mu)$, $\alpha_+(y,t)$, etc.). Another difference is that $M_+$ and $M_-$ are exchanged in the jump relation \eqref{jump-y} so that here the jump is the inverse of that in \cite{BKS20}: $J_0=\hat J_0^{-1}$ and $J=\hat J^{-1}$.
\end{remark}

\begin{remark}
The symmetries \eqref{sym-M-hat} are consistent with the symmetries of $r(\mu)$
\begin{equation}\label{sym-r}
r(\mu)=-\overline{r(-\mu)}=\overline{r(\mu^{-1})}
\end{equation}
and the invariance of the set $\{\mu_j,\rho_j\}_{j=1}^N$:
$-\overline{\mu_j}=\mu_{j'}$ and $-\mu_j^{-1}=\mu_{j''}$ with $\rho_j=\overline{\rho_{j'}}=-\mu_j^{-2}\rho_{j''}$.
These symmetries and invariances follow from the construction of the RH problem above in terms of the dedicated (Jost) solutions of the Lax pair equations associated with the mCH equation, see \cite{BKS20}. Moreover, the symmetries \eqref{sym-M-hat} imply the particular structure of the matrices in \eqref{hat-M-i}.
\end{remark}

In the general context of nonlinear integrable equations,
the RH problem formalism (i.e., the representation of the solution
of the original problem --- the Cauchy problem for a nonlinear integrable PDE ---
in terms of the solution of an associated RH problem)
 allows reducing the problem of the large
time analysis of the solution of the nonlinear PDE to that of the RH problem.
Residue conditions (if any) involved in the  RH problem formulation
generate a soliton-type, non-decaying contribution to the asymptotics
whereas the jump conditions are responsible for  the dispersive (decaying) part,
details of which can be retrieved applying an appropriate modification
of the nonlinear steepest descent method to the asymptotic analysis of
a preliminarily regularized RH problem (i.e., a RH problem involving the jump and normalization conditions only).

With this respect we notice that the residue conditions \eqref{res-M-hat} can be handled in a standard way: either adding to the contour small circles around each $\mu_j$ and $\bar \mu_j$ and reducing the residue conditions to associated jump conditions across the circles or using the Blaschke--Potapov factors (see, e.g., \cite{BKST09}); in both approaches, the original RH problem is reduced to a RH problem without residue conditions.

As for the singularity conditions, we notice that in the case of the Camassa--Holm equation, where such a condition is also involved in the matrix RH problem formalism, an efficient way to handle it is to reduce the matrix RH problem to a vector one, multiplying from the left by the constant vector $(1,1)$. Indeed, the singularity condition for the CH equation has the form of \eqref{sing-M-hat-b}, and thus this multiplication ``kill'' the singularity, reducing the RH problem to a regular one. With this respect, we notice that the matrix RH problem for the modified Camassa--Holm equation is different: it also involves the singularity condition \eqref{sing-M-hat-a}, which, obviously, cannot be removed using the same trick.

In the present paper, we focus on the study of the dispersive part of the large-time asymptotics of solutions of the Cauchy problems for the mCH equation. Accordingly, we proceed with the solitonless case assuming that there are no residue conditions (inclusion of the discrete spectrum can then be made following a well-developed technique, see, e.g., \cite{BKST09}).

In this section we  reduce the original RH problem (which is still singular due to conditions \eqref{sing-M-hat}) to a regular one, proceeding in two steps.

In Step 1, we reduce the RH problem with the singularity conditions
\eqref{sing-M-hat} at $\mu=\pm 1$ to a RH problem
which is characterized by the following two conditions:
\begin{enumerate}[(i)] 
\item
the matrix entries are regular at $\mu=\pm 1$, but the  determinants of the (matrix) solution vanish at $\mu=\pm 1$ (notice that $\det M(\mu)\equiv 1$ for the solution of the original RH problem);
\item
the solution is singular at $\mu=0$.
\end{enumerate}

Then, in Step 2, the latter RH problem  is reduced to a regular one, i.e., to a RH problem with the jump and normalization conditions only.

\begin{proposition}\label{p1}
Let $M(y,t,\mu)$ be a solution of the RH problem \eqref{jump-y}, \eqref{normalization}--\eqref{sing-M-hat}. Define $\tilde M$ by
\begin{equation}\label{M-to-tilde}
\tilde M(y,t,\mu)\coloneqq\left(I-\frac{1}{\mu}\sigma_1\right)M(y,t,\mu).
\end{equation}
Then $\tilde M(\mu)\equiv\tilde M(y,t,\mu)$ is the unique solution of the following RH problem:
\begin{enumerate}[\rm({C}1)]
\item
$\tilde M(\mu)$ is analytic in $\D{C}_+$ and $\D{C}_-$ and continuous up to $\D{R}\setminus\{0\}$.
\item
$\tilde M(\mu)$ satisfies the jump condition \eqref{jump-y} with the jump defined by \eqref{J-J0}--\eqref{J0}.
\item
$\tilde M(\mu)\to I$ as $\mu\to\infty$.
\item
$\tilde M(\mu)=-\frac{1}{\mu}\sigma_1+\ord(1)$ as $\mu\to 0$.
\item
$\det\tilde M(\pm 1)=0$.
\item
$\tilde M(\mu^{-1})=-\mu\tilde M(\mu)\sigma_1$.
\end{enumerate}
\end{proposition}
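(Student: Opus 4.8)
The plan is to verify directly that $\tilde M\coloneqq(I-\frac{1}{\mu}\sigma_1)M$ satisfies (C1)--(C6), and then to prove uniqueness by a Liouville argument. Write $B(\mu)\coloneqq I-\frac{1}{\mu}\sigma_1$, so $\tilde M=BM$. Since $B$ is rational with its only pole at $\mu=0$, analytic across $\D{R}\setminus\{0\}$, and $B(\mu)\to I$ as $\mu\to\infty$, conditions (C2) and (C3) are inherited from \eqref{jump-y} and \eqref{normalization}: indeed $\tilde M_\pm=BM_\pm$, so $\tilde M_+=BM_-J=\tilde M_-J$, and $\tilde M\to I$. For (C5), since $\det M\equiv 1$ one gets $\det\tilde M=\det B=\frac{\mu^2-1}{\mu^2}$, which vanishes at $\mu=\pm1$. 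For (C6), I would combine the symmetry $M(\mu^{-1})=\sigma_1M(\mu)\sigma_1$ --- a consequence of \eqref{sym-M-hat} --- with the identity $B(\mu^{-1})\sigma_1=(I-\mu\sigma_1)\sigma_1=\sigma_1-\mu I=-\mu B(\mu)$, which gives $\tilde M(\mu^{-1})=B(\mu^{-1})\sigma_1M(\mu)\sigma_1=-\mu\tilde M(\mu)\sigma_1$. Finally the same symmetry taken as $\mu\to\infty$ together with \eqref{normalization} forces $M(0)=I$, so $\tilde M(\mu)=M(\mu)-\frac{1}{\mu}\sigma_1M(\mu)=-\frac{1}{\mu}\sigma_1+\ord(1)$ near $\mu=0$, which is (C4).

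The substantive point is (C1): the singularities \eqref{sing-M-hat} of $M$ at $\mu=\pm1$ must be removed by the left factor $B$. The matrices $B(1)=I-\sigma_1$ and $B(-1)=I+\sigma_1$ have rank one, with kernels $\D{C}\binom{1}{1}$ and $\D{C}\binom{1}{-1}$ respectively. Reading off \eqref{sing-M-hat-a}--\eqref{sing-M-hat-b}, the rank-one coefficient matrices of the simple poles of $M$ at $\mu=1$ and $\mu=-1$ coming from $\D{C}_+$ have column spaces exactly $\D{C}\binom{1}{1}$ and $\D{C}\binom{1}{-1}$; transporting them to $\D{C}_-$ by \eqref{sym-M-hat} gives the same column spaces there. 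Hence $B(\pm1)$ annihilates each such coefficient, $BM$ extends continuously to $\mu=\pm1$ from both sides, and (C1) follows (in the solitonless case $M$ has no other singularities).

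For uniqueness, let $\tilde N$ be any solution of (C1)--(C6) and set $R\coloneqq\tilde N\tilde M^{-1}$. As $\tilde N$ and $\tilde M$ share the jump $J$ with $\det J\equiv 1$, $R$ has no jump across $\D{R}\setminus\{0\}$; near $\mu=0$, (C4) gives $\tilde N(\mu)=-\frac{1}{\mu}\sigma_1+\ord(1)$ and $\tilde M^{-1}(\mu)=-\mu\sigma_1+\ord(\mu^2)$, hence $R(\mu)=I+\ord(\mu)$. It remains to show $R$ is regular at $\mu=\pm1$. Liouville's theorem applied to the entire function $\mu^2\det\tilde N(\mu)$ (using (C3), (C4), (C6)) forces $\det\tilde N(\mu)=\frac{\mu^2-1}{\mu^2}$, so $\tilde N(\pm1)$ has rank exactly one; combining the jump relation at $\mu=\pm1$ (valid by continuity, as $p(y,t,\pm1)=0$) with (C6) and the reality of $r(\pm1)$ (from \eqref{sym-r}) identifies $\ker\tilde N(\pm1)=\ker\tilde M(\pm1)$, and this is precisely the column space of the simple-pole coefficient of $\tilde M^{-1}$ at $\mu=\pm1$. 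Therefore $\tilde N(\pm1)$ kills that pole, $R$ is regular at $\mu=\pm1$, hence $R$ is entire with $R\to I$ at $\infty$, so $R\equiv I$ and $\tilde N=\tilde M$. I expect the bookkeeping at $\mu=\pm1$ to be the main obstacle: in the verification the rank-one structure of the pole coefficients in \eqref{sing-M-hat} must match the kernels of $B(\pm1)$, and in the uniqueness step the degeneration direction of $\tilde N(\pm1)$ has to be recovered from (C2), (C5), (C6) alone, where it is the symmetry (C6) --- absent from a generic regular RH problem --- that pins it down. Alternatively, for uniqueness one may set $N\coloneqq B^{-1}\tilde N$, check that $N$ solves the original singular RH problem \eqref{jump-y}, \eqref{normalization}--\eqref{sing-M-hat}, and invoke its uniqueness from \cite{BKS20}.
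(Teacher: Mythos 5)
Your proposal is correct and follows essentially the same route as the paper: direct verification that the left factor $I-\tfrac{1}{\mu}\sigma_1$ annihilates the rank-one singular parts in \eqref{sing-M-hat} (the paper does this by the explicit row computation), and uniqueness by a Liouville argument on the ratio of two solutions, with the determinant identity $\det\tilde M=1-\mu^{-2}$ and the symmetry (C6) pinning the degeneration directions $\binom{1}{\pm 1}$ at $\mu=\pm 1$ so that the ratio stays bounded there. The only cosmetic differences are that the paper fixes the $c/\mu$ term in the determinant via (C5) where you use (C6), and your closing alternative (mapping back by $B^{-1}$ and citing uniqueness from \cite{BKS20}) would need extra work to recover the specific structure of \eqref{sing-M-hat}, but your main argument does not rely on it.
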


\begin{proof}
First, let's check that $\tilde M(y,t,\mu)$ constructed from $M(y,t,\mu)$ satisfies the conditions above. The limiting properties (C3) and (C4) as $\mu\to \infty$ and as $\mu\to 0$ are obviously satisfied (by construction) whereas (C2) results from the fact that a multiplication from the left does not change the jump conditions. Further, since $\det M(y,t,\mu)\equiv 1$, it follows that $\det\tilde M(y,t,\mu)=1-\frac{1}{\mu^2}$ and thus $\det\tilde M(y,t,\pm 1)=0$. Moreover, as $\mu\to 1$ we have
\begin{align*}
\left(\tilde M_{11}(\mu),\tilde M_{12}(\mu)\right) & =
\left(M_{11}(\mu), M_{12}(\mu)\right)-\frac{1}{\mu}\left(M_{21}(\mu), M_{22}(\mu)\right) \\
&=\left(M_{11}(\mu) - M_{21}(\mu), M_{12}(\mu)-M_{22}(\mu)\right) +\ord(1)=\ord(1)
\end{align*}
due to \eqref{sing-M-hat-a}. Similarly, as $\mu\to -1$ we have
\[
\left(\tilde M_{11}(\mu), \tilde M_{12}(\mu)\right)=\left(M_{11}(\mu) + M_{21}(\mu), M_{12}(\mu)+M_{22}(\mu)\right) +\ord(1)=\ord(1)
\]
due to \eqref{sing-M-hat-b}. Similarly for $\bigl(\tilde M_{21}(\mu), \tilde M_{22}(\mu)\bigr)$; thus $\tilde M(y,t,\mu)$ is non-singular at $\mu=\pm 1$. Finally, (C6) follows from the symmetry relations \eqref{sym-M-hat} (more precisely, from $M(\mu^{-1})=\sigma_1 M(\mu) \sigma_1$).

Now, let's prove that the solution of the RH problem (C1)--(C6) above
is unique (if exists). First, we notice that if $\tilde M(y,t,\mu)$ solves the RH problem (C1)--(C6), then
\begin{equation}\label{det-M}
\det \tilde M(y,t,\mu)=1-\frac{1}{\mu^2}.
\end{equation}
Indeed, since $\det J(y,t,\mu)\equiv 1$ and $\det M(y,t,\mu)$ is bounded at $\mu=\infty$, it follows that $\det M(\mu)$ is a rational function. Moreover, from (C4) we have that $\det M(\mu)=-\frac{1}{\mu^2} + \frac{c}{\mu} + \ord(1)$ as $\mu\to 0$, with some $c\equiv c(y,t)$. Taking into account (C3) we have that $\zeta(y,t,\mu)\coloneqq\det M(y,t,\mu) - 1 + \frac{1}{\mu^2} - \frac{c}{\mu}$ is a bounded entire function of $\mu$, which, by Liouville's theorem and (C3), vanishes for all $(y,t)$. Finally, evaluating $\zeta(y,t,\mu)$ at $\mu=\pm 1$ and using (C5), it follows that $c(y,t)\equiv 0$ and thus \eqref{det-M} follows.

Now let's assume that $\tilde{\tilde M}$ is another solution of the RH problem (C1)--(C6) and define $N(\mu)\coloneqq\tilde M(\mu){\tilde{\tilde M}}^{-1}(\mu)$. Since $\tilde M$ and $\tilde{\tilde M}$ satisfy the same jump conditions, $N(\mu)$ is a rational function, with possible singularities at $\mu=0, -1, 1$. In view of \eqref{det-M} and (C3), ${\tilde{\tilde M}}^{-1}(\mu)=\frac{\mu^2}{\mu^2 -1}(\frac{1}{\mu} \sigma_1 + \ord(1))= \ord(\mu)$ as $\mu\to 0$ and thus  $N(\mu)$ is non-singular at $\mu=0$. In order to prove that $N(\mu)$ is non-singular at $\mu=\pm 1$, we use relation (C6). In particular, we have $\tilde M(1)=-\tilde M(1)\sigma_1$ and thus $\tilde M(\mu)=\left(\begin{smallmatrix}g_1 &-g_1\\g_2&-g_2\end{smallmatrix}\right)+\ord(\mu-1)$ as $\mu\to 1$, with some $g_j$, $j=1,2$. Consequently, ${\tilde{\tilde M}}^{-1}(\mu)=\frac{\mu^2}{\mu^2 -1}\left(\left(\begin{smallmatrix}-\tilde{g}_2 & \tilde{g}_1 \\ -\tilde{g}_2 & \tilde{g}_1\end{smallmatrix}\right)+\ord(\mu-1)\right)$ as $\mu\to 1$, with some $\tilde{g}_j$, $j=1,2$, which implies that $N(\mu)$ is bounded as $\mu\to 1$. Similarly for $\mu\to -1$. Therefore, $N(\mu)$ is an entire function such that $N(\infty)=I$ and thus $N(\mu)\equiv I$ by Liouville's theorem.
\end{proof}

\begin{remark}
Assuming $r(\mu)=-\overline{r(-\mu)}$ (see \eqref{sym-r}), we have that
$J(\mu)$ satisfies the symmetries
\[
J(\mu)=\sigma_3 \overline{J(-\mu)}\sigma_3=\sigma_1 \overline{J^{-1}(\mu)} \sigma_1,
\]
which, due to uniqueness, imply the symmetries for $\tilde M$ similar to those
for $M$:
\begin{equation}\label{sym-tilde-M}
\tilde M(\mu)=\sigma_3\overline{\tilde M(-\bar\mu)}\sigma_3=\sigma_1 \overline{\tilde M(\bar\mu)}\sigma_1
\end{equation}
(taking also into account that the symmetries \eqref{sym-tilde-M}
are consistent with all conditions in the RH problem in Proposition \ref{p1}).
\end{remark}

Step 2 in the reduction of the RH problem is formulated in the following proposition (see \cites{IU86,V00,V03} for the case of the nonlinear Schr\"odinger equation with ``finite density'' boundary conditions).

\begin{proposition}\label{p2}
The solution $\tilde M$ of the RH problem from Proposition \ref{p1} can be represented in terms of the solution of a regular RH problem as follows:
\begin{equation}\label{to-M-reg}
\tilde M(y,t,\mu)=\left(I-\frac{1}{\mu}\Delta(y,t)\right)M^R(y,t,\mu),
\end{equation}
where $M^R(\mu)\equiv M^R(y,t,\mu)$ is the solution of the following RH problem: Find $M^R(\mu)$ such that
\begin{enumerate}[\rm({R}1)]
\item
$M^R(\mu)$ is analytic in $\D{C}_+$ and $\D{C}_-$ and continuous up to the real axis.
\item
$M^R(\mu)$ satisfies the jump condition \eqref{J-J0}--\eqref{jump-y}.
\item
$M^R(\mu)\to I$ as $\mu\to\infty$.
\end{enumerate}
Here $\Delta$ in \eqref{to-M-reg} is expressed in terms of the solution $M^R$ of the RH problem above by: 
\[
\Delta(y,t)=\sigma_1[M^R(y,t,0)]^{-1}.
\]
\end{proposition}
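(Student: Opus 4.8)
The plan is to argue in reverse: rather than peeling the pole out of $\tilde M$ directly, I would start from the solution $M^R$ of the regular problem (R1)--(R3), multiply it on the left by the rational prefactor $I-\frac1\mu\Delta$ with $\Delta$ as in the statement, and check that the resulting matrix satisfies all of conditions (C1)--(C6) of Proposition \ref{p1}. The uniqueness proved there then forces it to coincide with $\tilde M$, which is exactly \eqref{to-M-reg}.

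First I would collect the properties of $M^R$ that are needed. Solvability and uniqueness of the regular RH problem (R1)--(R3) is standard (a Zhou-type vanishing-lemma argument, using the Schwarz symmetry $J(\mu)=\sigma_1\overline{J^{-1}(\mu)}\sigma_1$ of the jump); since $\det J\equiv 1$ and $M^R\to I$, the function $\det M^R$ is entire, bounded and tends to $1$, hence $\det M^R\equiv 1$, so $M^R(y,t,0)$ is invertible, $\Delta(y,t)\coloneqq\sigma_1[M^R(y,t,0)]^{-1}$ is well defined, and $\det\Delta=-1$. The key extra ingredient is an \emph{inversion symmetry} of $M^R$. From \eqref{p-y} one checks $p(y,t,\mu^{-1})=-p(y,t,\mu)$ (under $\mu\mapsto\mu^{-1}$ the factor $\tfrac{8\mu^2}{(\mu^2+1)^2}$ is invariant and $\tfrac{\mu^2-1}{\mu}$ is odd), which together with $r(\mu^{-1})=\overline{r(\mu)}$ from \eqref{sym-r} gives the jump symmetry $J(y,t,\mu^{-1})=\sigma_1 J(y,t,\mu)^{-1}\sigma_1$. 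Since $\mu\mapsto\mu^{-1}$ exchanges $\D{C}_+$ and $\D{C}_-$ and reverses the orientation of $\D{R}$, the matrix $\mu\mapsto\sigma_1 M^R(y,t,\mu^{-1})\sigma_1$ is again analytic in $\D{C}_+$ and $\D{C}_-$ and carries the \emph{same} jump $J$; hence it differs from $M^R$ only by a constant left factor equal to its value at $\mu=\infty$, and one obtains
\begin{equation*}
M^R(y,t,\mu^{-1})=M^R(y,t,0)\,\sigma_1\,M^R(y,t,\mu)\,\sigma_1 .
\end{equation*}
Letting $\mu\to 0$ here gives $\bigl(M^R(y,t,0)\sigma_1\bigr)^{2}=I$, so $\Delta=\sigma_1[M^R(0)]^{-1}=M^R(0)\sigma_1$ satisfies $\Delta^2=I$ and $\Delta M^R(0)=\sigma_1$, and since $\det\Delta=-1$ this also forces $\operatorname{tr}\Delta=0$.

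With this in hand I would set $\hat M(\mu)\coloneqq\bigl(I-\frac1\mu\Delta\bigr)M^R(\mu)$ and verify (C1)--(C6). Conditions (C1)--(C3) are immediate, since $I-\frac1\mu\Delta$ is rational with its only pole at $\mu=0$, tends to $I$ as $\mu\to\infty$, and left multiplication does not alter the jump. For (C4) the singular part of $\hat M$ at $\mu=0$ is $-\frac1\mu\Delta M^R(0)=-\frac1\mu\sigma_1$. For (C5), $\det\hat M=\det\bigl(I-\frac1\mu\Delta\bigr)=1-\frac1\mu\operatorname{tr}\Delta+\frac1{\mu^2}\det\Delta=1-\frac1{\mu^2}$, which vanishes at $\mu=\pm1$ (note $M^R$ is finite there by (R1)). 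For (C6), using the inversion symmetry together with $M^R(0)\sigma_1=\Delta$ and $\Delta^2=I$,
\begin{equation*}
\hat M(\mu^{-1})=\bigl(I-\mu\Delta\bigr)M^R(\mu^{-1})=\bigl(I-\mu\Delta\bigr)\Delta\,M^R(\mu)\sigma_1=(\Delta-\mu I)M^R(\mu)\sigma_1=-\mu\,\hat M(\mu)\sigma_1 .
\end{equation*}
Thus $\hat M$ solves the RH problem of Proposition \ref{p1}, hence $\hat M=\tilde M$ by uniqueness, and $\Delta=\sigma_1[M^R(0)]^{-1}$, which is \eqref{to-M-reg}.

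The main obstacle is the second paragraph: producing the inversion symmetry of $M^R$ with the correct constant prefactor, which requires careful orientation and half-plane bookkeeping under $\mu\mapsto\mu^{-1}$ as well as the verification of $J(\mu^{-1})=\sigma_1 J(\mu)^{-1}\sigma_1$; one should also confirm that the regular RH problem is genuinely well posed near $\mu=0$, where the phase $p$ becomes unbounded (this is a routine point I would handle separately). Once the symmetry and its consequences $\Delta^2=I$, $\operatorname{tr}\Delta=0$, $\Delta M^R(0)=\sigma_1$ are available, the verification of (C1)--(C6) is a short computation.
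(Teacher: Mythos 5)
Your proposal is correct and follows essentially the same route as the paper: build $\hat M=(I-\mu^{-1}\Delta)M^R$, derive the inversion symmetry $M^R(\mu^{-1})=\Delta M^R(\mu)\sigma_1$ by a Liouville argument from $J(\mu)=\sigma_1 J^{-1}(\mu^{-1})\sigma_1$, verify (C1)--(C6), and conclude by the uniqueness established in Proposition \ref{p1}. The only deviations are cosmetic: the paper gets $\Delta^2=I$ and $\det\bigl(I-\mu^{-1}\Delta\bigr)=1-\mu^{-2}$ from the Schwarz symmetries \eqref{sym-M-reg} (explicit form of $M^R(0)$), whereas you obtain them by letting $\mu\to 0$ in the inversion relation together with $\det\Delta=-1$ and Cayley--Hamilton, and your direct substitution for (C6) bypasses the paper's computation of $Q(\mu)$.
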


\begin{proof}
Let $M^R(\mu)$ be the solution of the regular RH problem (R1)-(R3) above. Then $\tilde M(y,t,\mu)$ defined by \eqref{to-M-reg} obviously (by construction) satisfies conditions (C1)-(C4) of the RH problem from Proposition \ref{p1}. In order to check conditions (C5) and (C6), we use the matrix structure of $\Delta$ that follows from the symmetries of $M^R(\mu)$.

(i) Since $M^R(\mu)$ and $M(\mu)$ satisfy the same jump conditions, the uniqueness of the solution of the regular RH problem implies that  $M^R(\mu)$ satisfies the same symmetries (see \eqref{sym-M-hat}) (generated by the symmetry $r(\mu)=-\overline{r(-\mu)}$):
\begin{equation}\label{sym-M-reg}
M^R(\mu)=\sigma_3 \overline{M^R(-\bar\mu)}\sigma_3 =
\sigma_1\overline{M^R(\bar\mu)}\sigma_1.
\end{equation}
Considering this for $\mu=0$ it follows that $M^R(y,t,0)=\left(\begin{smallmatrix}\alpha(y,t)&\ii\beta(y,t)\\-\ii\beta(y,t)&\alpha(y,t)\end{smallmatrix}\right)$ with some $\alpha(y,t)\in\D{R}$ and $\beta(y,t)\in\D{R}$. Moreover, $\alpha^2(y,t)-\beta^2(y,t)\equiv 1$ since $\det M^R(\mu)\equiv 1$. Consequently, $\Delta(y,t)$ has the structure
\begin{equation}\label{Delta}
\Delta=\begin{pmatrix}
\ii\beta & \alpha \\ \alpha & -\ii\beta
\end{pmatrix}\text{ with }\alpha^2-\beta^2= 1
\end{equation}
and thus $\det(I-\mu^{-1}\Delta(y,t))=1-\frac{\alpha^2-\beta^2}{\mu^2}=1-\frac{1}{\mu^2}$, which implies (C5). Notice that $\Delta^2\equiv I$.

(ii) Now consider the symmetry $\mu\mapsto\mu^{-1}$. From $r(\mu)=\overline{r(\mu^{-1})}$ it follows that $J(\mu)=\sigma_1 J^{-1}(\mu^{-1})\sigma_1$ and thus $\check M(\mu)\coloneqq\sigma_1 M^R(\mu^{-1})\sigma_1$ satisfies the same jump condition as $M^R(\mu)$ does. Taking into account that $\check M(\infty) =\sigma_1 M^R(0)\sigma_1$, Liouville's theorem implies that $\check M^{-1}(\infty)\check M(\mu)\equiv \sigma_1 [M^R(0)]^{-1} M^R(\mu^{-1})\sigma_1=M(\mu)$, or, in terms of $\Delta$,
\begin{equation}\label{sym-M-R}
M^R(\mu^{-1})=\Delta M^R(\mu) \sigma_1.
\end{equation}
Now, combining \eqref{to-M-reg} with \eqref{sym-M-R} we can express $\tilde M(\mu^{-1})$ in terms of $\tilde M(\mu)$ as follows:
\begin{equation}\label{tilde-M-1}
\tilde M(\mu^{-1})=(I-\Delta \mu)M^R(\mu^{-1})= (I-\Delta \mu)  \Delta M^R(\mu)\sigma_1= Q(\mu)\tilde M(\mu)\sigma_1
\end{equation}
with
\[
Q(\mu)=(I-\Delta \mu) \Delta \left(I-\Delta \mu^{-1}\right)^{-1}.
\]
Using \eqref{Delta}, direct calculations give $Q(\mu)=-\mu I$
and thus the symmetry \eqref{sym-M-R} takes the form of (C6) in Proposition \ref{p1}.
\end{proof}

\subsubsection*{From $M^{R}$ back to $\tilde u$}
Now, we can obtain a parametric representation of the solution $\tilde u(x,t)$ of the Cauchy problem \eqref{mCH2} in terms of the solution $M^R(y,t,\mu)$ of the regular RH problem from Proposition \ref{p2}. First, using \eqref{M-to-tilde} and \eqref{to-M-reg}, we get $M$ from $M^R$:
\begin{equation}\label{M-all-1}
M(\mu)=\left(I-\frac{1}{\mu}\sigma_1\right)^{-1}\left(I-\frac{1}{\mu}\Delta\right)M^R(\mu).
\end{equation}
Then, by \eqref{hat-M-i} and \eqref{u-and-x} we find
\[
M(y,t,\mu)\leadsto\accol{a_1(y,t),a_2(y,t),a_3(y,t)}\leadsto\accol{\hat u(y,t),x(y,t)},
\]
and finally $\tilde u(x,t)=\hat u(y(x,t),t)$.

\section{Large-time asymptotics of the regular RH problem}\label{s:as-reg}

In this section, we study the large-time asymptotics of the solution $M^R(y,t,\mu)$ of the regular RH problem from Proposition~\ref{p2} using the ideas and tools of the nonlinear steepest descent method \cite{DZ93}. The method consists in successive transformations of the original RH problem, in order to reduce it to an explicitly solvable problem. The different steps include appropriate triangular factorizations of the jump matrix; ``absorption'' of the triangular factors with good large-time behavior; reduction, after rescaling, to a RH problem which is solvable in terms of certain special functions; analysis of the approximation errors. Here we focus on deriving the leading terms of the large-time asymptotics, while for error estimates we refer to \cite{L15}.
\subsection{Transformations of the regular RH problem}

Introduce
\[
\theta(\mu,\xi)\coloneqq\hat\theta(k(\mu),\xi),
\]
where
\begin{equation}\label{xi-k-theta}
\xi\coloneqq\frac{y}{t},\quad k(\mu)\coloneqq\frac{1}{4}\left(\mu-\frac{1}{\mu}\right),\quad\hat\theta(k,\xi)\coloneqq k\xi-\frac{2k}{1+4k^2}.
\end{equation}
Hence, $p(y,t,\mu)=\ii t\theta(\mu,\xi)$. The jump matrix \eqref{jump-y} with \eqref{J-J0}--\eqref{J0} allows two triangular factorizations:
\begin{subequations}\label{triang}
\begin{alignat}{3}\label{triang-a}
J(y,t,\mu)&=\begin{pmatrix}
1&  r(\mu)\eul^{-2\ii t\theta}\\
0& 1\\
\end{pmatrix}
\begin{pmatrix}
1& 0\\
-r^*(\mu)\eul^{2\ii t\theta}& 1\\
\end{pmatrix},\\
\label{triang-b}
J(y,t,\mu)&=
\begin{pmatrix}
1& 0\\
-\frac{r^*(\mu)}{1-|r(\mu)|^2}\eul^{2\ii t\theta}& 1\\
\end{pmatrix}
\begin{pmatrix}
1-|r(\mu)|^2& 0\\
0& \frac{1}{1-|r(\mu)|^2}\\
\end{pmatrix}
\begin{pmatrix}
1& \frac{r(\mu)}{1-|r(\mu)|^2}\eul^{-2\ii t\theta}\\
0& 1\\
\end{pmatrix}.
\end{alignat}
\end{subequations}

Following the basic idea of the nonlinear steepest descent method \cite{DZ93}, the factorizations \eqref{triang} can be used in such a way that the (oscillating) jump matrix on $\D{R}$ for a modified RH problem reduces (see the RH problem for $M^{(2)}$ below) to the identity matrix whereas the arising jumps outside $\D{R}$ are exponentially small as $t\to+\infty$. The use of one or another form of the factorization is dictated by the ``signature table'' for $\theta$, i.e., the distribution of signs of $\Im\theta(\mu,\xi)$ (that depends on $\xi$) in the $\mu$-complex plane. The factorization \eqref{triang-a} is appropriate for the (open) intervals of $\D{R}$ (let us denote their union by $\Sigma_a\equiv\Sigma_a(\xi)$) for which $\Im\theta(\mu)$ is positive for $\mu\in\D{C}_+$ close to these intervals (and negative for $\mu\in\D{C}_-$ close to the same intervals). On the other hand the factorization \eqref{triang-b} is appropriate for the (open) intervals of $\D{R}$ (we denote their union by $\Sigma_b(\xi)=\D{R}\setminus\overline{\Sigma_a(\xi)}$), for which $\Im\theta(\mu)$ is negative for $\mu\in\D{C}_+$ close to these intervals.

In turn, one can get rid of the diagonal factor in \eqref{triang-b} using the solution of the following scalar RH problem: Find a scalar function $\delta(\mu,\xi)$ ($\xi$ being a parameter) analytic in
$\D{C}\setminus\overline{\Sigma_b(\xi)}$ such that
\begin{subequations}\label{delta-eq}
\begin{alignat}{3}\label{delta-eq-a}
\delta_+(\mu,\xi)&=\delta_-(\mu,\xi)(1-|r(\mu)|^2),  \quad \mu\in\Sigma_b(\xi),\\
\label{delta-eq-b}
\delta(\mu,\xi)&\to 1,\quad\mu\to\infty.
\end{alignat}
\end{subequations}
The solution of the RH problem \eqref{delta-eq} is given by the Cauchy integral:
\begin{equation}\label{delta}
\delta(\mu,\xi)=\exp\left\{\frac{1}{2\pi\ii}\int_{\Sigma_b(\xi)}
\frac{\ln(1-|r(s)|^2)}{s-\mu}\dd s\right\}.
\end{equation}

Define $M^{(1)}(y,t,\mu)\coloneqq M^R(y,t,\mu)\delta^{-\sigma_3}(\mu,\xi)$. Then $M^{(1)}$ can be characterized as the solution of the RH problem including the standard normalization condition $M^{(1)}(\mu)\to I$ as $\mu\to\infty$ and the jump condition
\begin{equation}\label{M1-jump}
M^{(1)}_+(y,t,\mu)=M^{(1)}_-(y,t,\mu)J^{(1)}(y,t,\mu),\quad \mu\in\D{R},
\end{equation}
where the jump matrix is factorized as
\begin{subequations}\label{J1}
\begin{alignat}{3}\label{J1-a}
J^{(1)}(y,t,\mu)&=\begin{pmatrix}
1&  r(\mu)\delta^2(\mu,\xi)\eul^{-2\ii t\theta}\\
0& 1\\
\end{pmatrix}
\begin{pmatrix}
1& 0\\
-r^*(\mu)\delta^{-2}(\mu,\xi)\eul^{2\ii t\theta}& 1\\
\end{pmatrix}, \quad &\mu\in\Sigma_a(\xi)\\
\label{J1-b}
J^{(1)}(y,t,\mu)&=
\begin{pmatrix}
1& 0\\
-\frac{r^*(\mu)}{1-|r(\mu)|^2}\delta^{-2}_-(\mu,\xi)\eul^{2\ii t\theta}& 1\\
\end{pmatrix}
\begin{pmatrix}
1& \frac{r(\mu)}{1-|r(\mu)|^2}\delta^{2}_+(\mu,\xi)\eul^{-2\ii t\theta}\\
0& 1\\
\end{pmatrix},\quad  & \mu\in\Sigma_b(\xi).
\end{alignat}
\end{subequations}

Now let us discuss the structure of $\Sigma_a(\xi)$ and $\Sigma_b(\xi)$. First, we notice that $\hat\theta(\xi,k)$ is exactly the same as in the case of the CH equation \cite{BS08-2}. Taking into account the relation between $\mu$ and $k$ (see \eqref{xi-k-theta}), the ``signature table'' for the CH equation near the real axis leads to that for the mCH equation (the latter being, additionally, symmetric w.r.t.\ $\mu\mapsto 1/\mu$) while the ranges of values of $\xi$ for which the ``signature table'' keeps the same structure are the same. Namely, one can distinguish four ranges of values of $\xi$ for which $\Sigma_a(\xi)$ and $\Sigma_b(\xi)$ have qualitatively different structures (which, consequently, implies four qualitatively different types of large-time asymptotics):
\begin{enumerate}[(I)]
\item
$\xi>2$,
\item
$0<\xi<2$,
\item
$-\frac{1}{4}<\xi<0$,
\item
$\xi<-\frac{1}{4}$.
\end{enumerate}
Each range of values of $\xi$ is characterized by the structure of $\Sigma_a(\xi)$ (or $\Sigma_b(\xi)$): $\Sigma_a(\xi)$ is the union of disjoint intervals whose end points are the (real) stationary points of $\theta(\mu,\xi)$, i.e., the points $\mu\in\D{R}$ where $\frac{\dd\theta}{\dd \mu}(\mu,\xi)=0$, and similarly for $\Sigma_b(\xi)$. More precisely,
\begin{equation}\label{Sigma-b}
\Sigma_b(\xi)=\begin{cases}
\emptyset, & \xi>2 \\
(-\mu_0,-\frac{1}{\mu_0})\cup (\frac{1}{\mu_0},\mu_0), & 0<\xi<2 \\
(-\infty, -\mu_1)\cup (-\mu_0,-\frac{1}{\mu_0})\cup(-\frac{1}{\mu_1}, \frac{1}{\mu_1})
\cup (\frac{1}{\mu_0},\mu_0)\cup(\mu_1,+\infty), & -\frac{1}{4}<\xi<0 \\
(-\infty,+\infty), & \xi<-\frac{1}{4}.
\end{cases}
\end{equation}
Here the values of $\mu_0(\xi)>1$ and $\mu_1(\xi)>1$ are those associated (via $\kappa_j=\frac{1}{4}(\mu_j-\frac{1}{\mu_j})$, $j=0,1$) with the (real) stationary points $\kappa_0(\xi)$ and $\kappa_1(\xi)$ of $\hat\theta(k)$, i.e., the end points in the case of the CH equation. They are determined by $\xi=\frac{2-8\kappa^2}{(1+4\kappa^2)^2}$, see \cite{BS08-2}:
\[
\kappa_0^2(\xi)=\frac{\sqrt{1+4\xi}-1-\xi}{4\xi},\qquad
\kappa_1^2(\xi)=-\frac{\sqrt{1+4\xi}+1+\xi}{4\xi}
\]
($\kappa_0(\xi)$ is relevant for ranges II and III whereas $\kappa_1(\xi)$ is relevant for range III only). In analogy with the case of the CH equation, for $\xi$ in ranges I and IV, the solution $M^{(2)}$ of the RH problem (see below) decays rapidly (as $t\to+\infty$) to the identity matrix, which corresponds (in the case without discrete spectrum) to rapid decay of the resulting $\hat u(y,t)$. On the other hand, ranges II and III are those where the large-time asymptotics in the case of the CH equation are of Zakharov--Manakov type (trigonometric oscillations decaying as $t^{-1/2}$), see \cites{BKST09,BS08-2}. Our main goal in the present paper is the derivation of analogous asymptotic formulas, for ranges II and III, in the case of the mCH equation.

The next step in the transformation of the RH problem is the ``absorption'' of the triangular factors in \eqref{J1-a} and \eqref{J1-b} into the solution of a deformed RH problem, with an enhanced jump contour (having parts outside $\D{R}$). This absorption requires the triangular factors in \eqref{J1-a} and \eqref{J1-b} to have analytic continuation at least into a band surrounding $\D{R}$. With this respect we notice that, as in the case of other integrable equations (in particular, the CH equation), the reflection coefficient $r(k)$ is defined, in general, for $k\in\D{R}$ only. However, one can approximate $r(k)$ and $\frac{r(k)}{1-|r(k)|^2}$ by some rational functions with well-controlled errors (see, e.g., \cite{L15}). Alternatively, if we \emph{assume} that the initial data $\tilde u(x,0)$ decays exponentially to $0$ as $x\to\pm\infty$ (or that $\tilde u(x,0)$ has finite support in $\D{R}$), then $r(k)$ turns out to be analytic in a band containing $k\in\D{R}$ (or analytic in the whole plane) and thus there is no need to use rational approximations in order to be able to perform this absorption (see the transformation $M^{(1)}\mapsto M^{(2)}$ below). Henceforth, in order to avoid technicalities and to keep the presentation of our main result as simple as possible, we \emph{assume} that $r(k)$ (and thus $1-(r(k)r^*(k))^2$) is analytic in a domain of the complex plane containing the contours of the successive RH problems (and refer to \cite{L15} for details related to the rational approximations).

For $0<\xi<2$ and for $-\frac{1}{4}<\xi<0$, we define a contour $\Sigma\equiv\Sigma(\xi)$ consistent with the signature table for $\theta(\mu,\xi)$, see Figures \ref{f1} and \ref{f2}, respectively.

\begin{figure}[ht]
\centering\includegraphics[scale=.6]{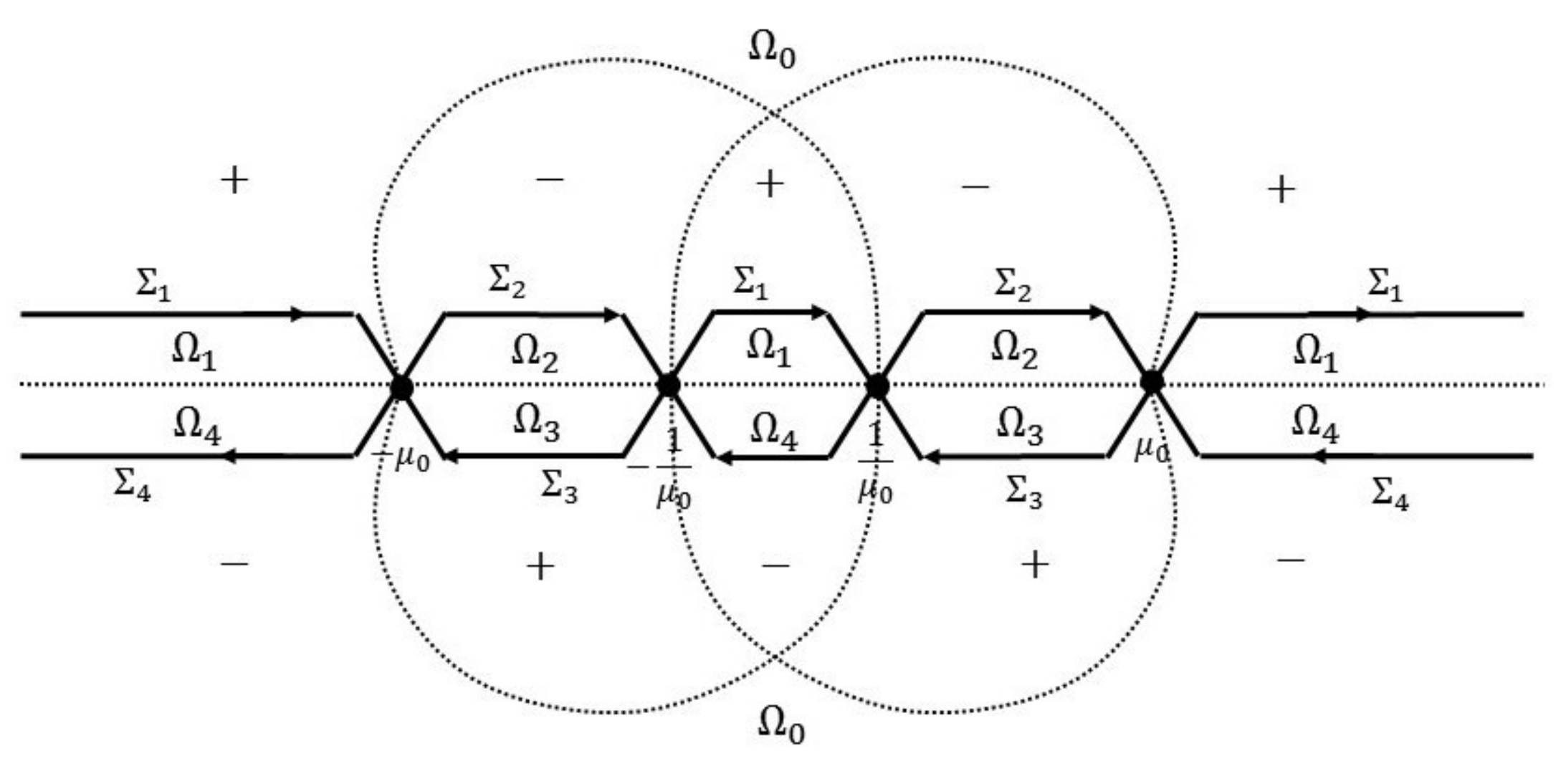}
\caption{Signature table (dotted lines), contour $\Sigma(\xi)=\cup_{j=1}^4\Sigma_j$ (solid lines) and domains $\Omega_j(\xi)$ for $0<\xi<2$.}
\label{f1}
\end{figure}

\begin{figure}[ht]
\centering\includegraphics[scale=.5]{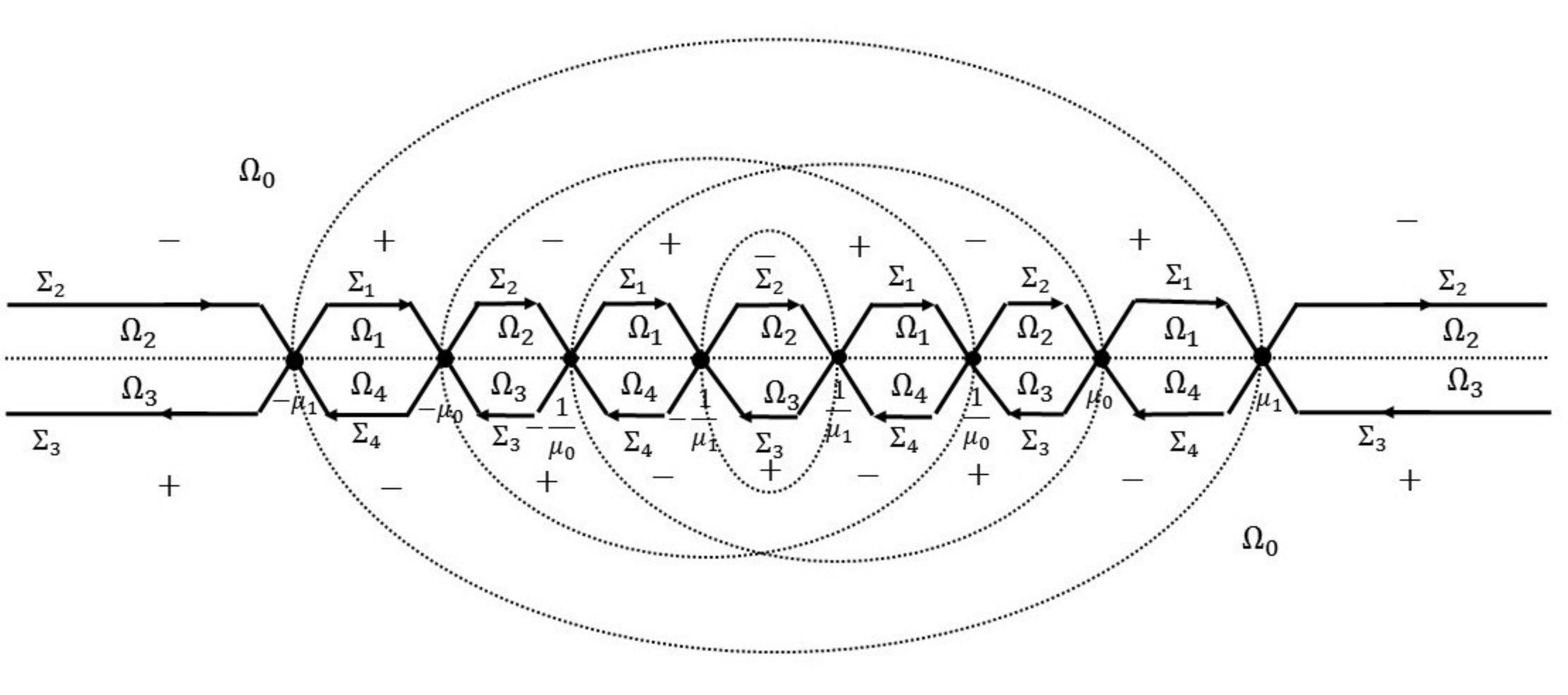}
\caption{Signature table (dotted lines), contour $\Sigma(\xi)=\cup_{j=1}^4\Sigma_j$ (solid lines) and domains $\Omega_j(\xi)$ for $-\frac{1}{4}<\xi<0$.}
\label{f2}
\end{figure}

Further, define $M^{(2)}$ by $M^{(2)}(y,t,\mu)\coloneqq M^{(1)}(y,t,\mu)P(y,t,\mu)$, where
\begin{equation}\label{M2}
P(y,t,\mu)=
\begin{cases}
I, & \mu\in\Omega_0,\\
\begin{pmatrix}
1& 0\\
r^*(\mu)\delta^{-2}(\mu,\xi)\eul^{2\ii t\theta}& 1\\
\end{pmatrix}, & \mu\in\Omega_1, \\
\begin{pmatrix}
1& -\frac{r(\mu)}{1-|r(\mu)|^2}\delta^{2}(\mu,\xi)\eul^{-2\ii t\theta}\\
0& 1\\
\end{pmatrix}, &
\mu\in\Omega_2,
\\
\begin{pmatrix}
1& 0\\
-\frac{r^*(\mu)}{1-|r(\mu)|^2}\delta^{-2}(\mu,\xi)\eul^{2\ii t\theta}& 1\\
\end{pmatrix}, & \mu\in\Omega_3,
\\
\begin{pmatrix}
1&  r(\mu)\delta^2(\mu,\xi)\eul^{-2\ii t\theta}\\
0& 1\\
\end{pmatrix}, & \mu\in\Omega_4.
\end{cases}
\end{equation}
Then $M^{(2)}(y,t,\mu)$ can be characterized as the solution of the RH problem with the standard normalization condition $M^{(2)}(\mu)\to I$ as $\mu\to\infty$ and the jump condition
\begin{equation}\label{M2-jump}
M^{(2)}_+(y,t,\mu)=M^{(2)}_-(y,t,\mu)J^{(2)}(y,t,\mu),\quad \mu\in\Sigma =
\cup_{j=1}^4 \Sigma_j,
\end{equation}
where $\Sigma_j=\overline{\Omega_0}\cap \overline{\Omega_j} $
and
\begin{equation}\label{J2}
J^{(2)}(y,t,\mu)=\begin{cases}
\begin{pmatrix}
1& 0\\
-r^*(\mu)\delta^{-2}(\mu,\xi)\eul^{2\ii t\theta}& 1\\
\end{pmatrix}, & \mu\in\Sigma_1, \\
\begin{pmatrix}
1& \frac{r(\mu)}{1-|r(\mu)|^2}\delta^{2}(\mu,\xi)\eul^{-2\ii t\theta}\\
0& 1\\
\end{pmatrix}, &
\mu\in\Sigma_2,
\\
\begin{pmatrix}
1& 0\\
\frac{r^*(\mu)}{1-|r(\mu)|^2}\delta^{-2}(\mu,\xi)\eul^{2\ii t\theta}& 1\\
\end{pmatrix}, & \mu\in\Sigma_3,
\\
\begin{pmatrix}
1&  -r(\mu)\delta^2(\mu,\xi)\eul^{-2\ii t\theta}\\
0& 1\\
\end{pmatrix}, & \mu\in\Sigma_4.
\end{cases}
\end{equation}

The RH problem for $M^{(2)}$ is such that uniform decay (as $t\to+\infty$) of the jump matrix is violated only near the stationary phase points of $\theta(\mu)$. The large-time analysis, with appropriate estimates, of such problems involves the ``comparison'' of the RH problem with that modified in small vicinities of the stationary phase points, using rescaled spectral parameters as well as approximations of the jump matrices in these vicinities \cites{DZ93}.

In our large-time analysis for $M^{(2)}$, we follow the strategy presented in \cite{L15}.

\begin{stepone*}
Add to $\Sigma$ small circles $\gamma_j$ surrounding $\mu_j$, $j=0,1$ and their images $-\gamma_{j}$ and $\pm \gamma_{j}^{-1}$ under the mappings $\mu\mapsto -\mu$ (surrounding $-\mu_j$) and $\mu\mapsto 1/\mu$ (surrounding $\pm 1/\mu_j$) respectively.
\end{stepone*}
\begin{steptwo*}
Inside the circles around $\mu_j$, $j=0,1$, define (explicitly)   $m_0(y,t,\mu)$ as functions that exactly satisfy the jump conditions with jumps obtained from $J^{(2)}$ by replacing $r(\mu)$ with $r(\mu_0)$ and $r(\mu_1)$, respectively, and by replacing $\delta^2(\mu,\xi)\eul^{-2\ii t\theta(\mu,\xi)}$ with its large-time approximations.
\end{steptwo*}
\begin{stepthree*}
Define $m_0(y,t,\mu)$ inside the other small contours using the symmetries $m_0(\mu)=\overline{m_0(1/\bar\mu)}$ and $m_0(\mu)=\sigma_3 \overline{m_0(-\bar \mu)}\sigma_3 $ (which are consistent with the symmetries of $M^{(2)}(\mu)$).
\end{stepthree*}
\begin{stepfour*}
Define $\hat m(\mu)$ by
\[
\hat m(y,t,\mu)=\begin{cases}
M^{(2)}(y,t,\mu)m_0^{-1}(y,t,\mu),&\text{inside } \pm\gamma_{j}
\text{ and }\pm \gamma_{j}^{-1},\\
M^{(2)}(y,t,\mu), & \text{otherwise},
\end{cases}
\]
Then $\hat m(\mu)$ satisfies the conditions of  the RH problem
\[
\begin{cases}
\hat m_+(y,t,\mu)=\hat m_-(y,t,\mu)\hat J(y,t,\mu),& \mu\in\hat\Sigma\coloneqq \Sigma\cup_j \{\pm\gamma_{j}\}\cup_j\{\pm \gamma_{j}^{-1}\}, \\
\hat m(y,t,\mu)\to I,& \mu\to\infty,
\end{cases}
\]
where
\[
\hat J(y,t,\mu)=\begin{cases}
m_0^{-1}(y,t,\mu), & \mu\in \cup_j \{\pm\gamma_{j}\}\cup_j\{\pm \gamma_{j}^{-1}\},\\
m_{0-}^{-1}(y,t,\mu)J^{(2)}(y,t,\mu)m_{0+}(y,t,\mu), &
k\in \Sigma\cap\{k\mid k\text{ inside }\cup_j\{\pm\gamma_{j}\}\cup_j\{\pm \gamma_{j}^{-1}\}\},\\
J^{(2)}(y,t,\mu),& \text{otherwise}.
\end{cases}
\]
On the other hand, the unique solution of this problem can be expressed in terms of the solution $\Theta(\mu)$ of the singular integral equation (see \cite{L15}*{Lemma 2.9}):
\begin{equation}\label{hat-m}
\hat m(y,t,\mu)= I + \frac{1}{2\pi\ii}\int_{\hat\Sigma}\Theta(y,t,s)\hat w(y,t,s)\frac{\dd s}{s-\mu}.
\end{equation}
Here $\hat w(y,t,s)\coloneqq\hat J(y,t,s)-I$ and $\Theta\in I+L^2(\hat \Sigma)$ is the solution of the integral equation
\[
\Theta(\mu) -  \C{C}_{\hat w} \Theta(\mu)=I,
\]
where $\C{C}_{\hat{w}}\colon L^2(\hat\Sigma)+L^{\infty}(\hat\Sigma)\to L^2(\hat\Sigma)$ is an integral operator defined with the help of the singular Cauchy operator: $\C{C}_{\hat{w}}f\coloneqq\C{C}_-(f\hat{w})$, where $\C{C}_-=\frac{1}{2}(-I+S_{\hat\Sigma})$ and $S_{\hat\Sigma}$ is the operator associated with $\hat\Sigma$ and defined by the principal value
of the Cauchy integral:
\[
(S_{\hat\Sigma}f)(\mu)=\frac{1}{2\pi\ii}\int_{\hat \Sigma}\frac{f(s)}{s-\mu} ds,\quad \mu\in\hat\Sigma.
\]
\end{stepfour*}
\begin{stepfive*}
Estimate the large-time behavior of $\hat m(y,t,\mu)$ at $\mu=\ii$ and $\mu=0$ taking into account the following facts:
\begin{enumerate}[\textbullet]
\item
The main contribution to the r.h.s.\ of \eqref{hat-m} comes from the integrals over the small contours, where $\hat w(y,t,\mu)=m_0^{-1}(y,t,\mu)-I$:
\begin{equation}\label{hat-m-as}
\hat m(y,t,\mu)=I+\frac{1}{2\pi\ii}\int_{\cup_j\{\pm\gamma_{j}\}\cup_j\{\pm\gamma_{j}^{-1}\}}\frac{m_0^{-1}(y,t,s)-I}{s-\mu}\dd s+ \osmall(t^{-1/2}).
\end{equation}
Henceforth the error estimates are uniform for $\varepsilon<\xi<2-\varepsilon$ and $-\frac{1}{4}+\varepsilon<\xi<-\varepsilon$, for any small $\varepsilon>0$. For detailed  estimates, see \cite{L15}.
\item
In turn, the main contribution to $m_0^{-1}(y,t,\mu)-I$ comes from the asymptotics of the RH problem for parabolic cylinder functions (involved in the construction of $m_0(y,t,\mu)$), see \cite{L15}*{Appendix B}, which can be given explicitly.
\end{enumerate}
\end{stepfive*}

\subsection{Range $0<\xi<2$}

This range is characterized by the presence of four real critical points: $\pm\mu_0$ and $\pm\mu_0^{-1}$.
\subsubsection{Construction of $m_0$}

First, we approximate $\ii t\theta(\mu,\xi)$ using \eqref{xi-k-theta},
the relation
\begin{equation}\label{kappa-mu}
\kappa_0=\frac{1}{4}\left(\mu_0-\frac{1}{\mu_0}\right)
\end{equation}
between $\mu_0$ and $\kappa_0$, and the approximation for $\hat\theta(k,\xi)$ near $\kappa_0$, see \cite{BS08-2}:
\[
\hat\theta(k,\xi)\approx \hat\theta(\kappa_0) + 8f_0(\kappa_0)(k-\kappa_0)^2,
\]
where
\begin{equation}\label{f0-hat-theta}
f_0(\kappa_0)=\frac{\kappa_0(3-4\kappa_0^2)}{(1+4\kappa_0^2)^3}, \qquad
\hat\theta(\kappa_0)=-\frac{16\kappa_0^3}{(1+4\kappa_0^2)^2}.
\end{equation}
We have $-\ii t\theta(\mu,\xi)\approx -\ii t \hat\theta(\kappa_0)-\frac{\ii\hat\mu^2}{4}$,
where the scaled spectral variable $\hat\mu$ is introduced by
\begin{equation}\label{hat-mu}
\mu-\mu_0=\frac{\hat\mu}{(1+\mu_0^{-2})\sqrt{2f_0 t}}.
\end{equation}

Now we approximate $\delta(\mu,\xi)$ near $\mu=\mu_0$. From \eqref{delta} we have
\begin{align}\label{delta-mu}
\delta(\mu,\xi)
&=\exp\left\{\frac{1}{2\pi\ii}\left(\int_{-\mu_0}^{-1/\mu_0}+ \int^{\mu_0}_{1/\mu_0}\right)\frac{\ln(1-|r(s)|^2)}{s-\mu}\dd s\right\}\notag\\
&=\left(\frac{\mu-\mu_0}{\mu-1/\mu_0}\right)^{\ii h_0} \left(\frac{\mu+1/\mu_0}{\mu+\mu_0}\right)^{\ii h_0}\eul^{\chi(\mu)},
\end{align}
where 
\begin{align*}
h_0&=-\frac{1}{2\pi}\ln(1-|r(\mu_0)|^2),\\
\chi(\mu)&=\frac{1}{2\pi\ii}\left(\int_{-\mu_0}^{-1/\mu_0}
+ \int^{\mu_0}_{1/\mu_0}
\right) \ln\frac{1-|r(s)|^2}{1-|r(\mu_0)|^2}\frac{\dd s}{s-\mu}
\end{align*}
(notice that $|r(\mu)|=|r(-\mu)|=|r(1/\mu)|$). Therefore (cf. \cite{BS08-2}),
\[
\delta(\mu,\xi)\approx (\mu-\mu_0)^{\ii h_0}\left(\frac{\mu_0+1/\mu_0}{2\mu_0(\mu_0-1/\mu_0)}\right)^{\ii h_0}\eul^{\chi(\mu_0)}
= \hat\mu^{\ii h_0}(128 f_0\kappa_0^2 t)^{-\frac{\ii h_0}{2}}\eul^{\chi(\mu_0)}
\]
and thus
\begin{equation}\label{d-e-appr}
\delta(\mu,\xi)\eul^{-\ii t \theta(\mu,\xi)} \approx \delta_{\mu_0}(\xi,t)\hat\mu^{\ii h_0}
\eul^{-\frac{\ii\hat\mu^2}{4}},
\end{equation}
where
\begin{equation}\label{d-mu}
\delta_{\mu_0}(\xi,t)=
\eul^{-\ii t \hat\theta(\kappa_0(\mu_0))}
\eul^{\chi(\mu_0)}
(128 f_0(\kappa_0(\mu_0))\kappa_0^2(\mu_0) t)^{-\frac{\ii h_0}{2}}.
\end{equation}

The approximation \eqref{d-e-appr} suggests introducing $m_0(y,t,\mu)$ (near $\mu=\mu_0$) as follows:
\begin{equation}\label{m0}
m_0(y,t,\mu)=D(\xi,t)m^X(\xi,\hat\mu)D^{-1}(\xi,t),
\end{equation}
where $D(\xi,t)=\delta_{\mu_0}^{\sigma_3}(t)$
and $m^X(\xi,\hat\mu)$ is the solution of the RH problem, in the $\hat\mu$-complex plane,
whose solution is given in terms of parabolic cylinder functions
\cite{L15} (with $q=-\bar r(\mu_0)$).

Since (see \eqref{hat-mu}) finite values of $\mu$ correspond to growing (with $t$) values of $\hat\mu$, the large-time asymptotics of $m_0(y,t,\mu)$ for $\mu$ on the small contours surrounding $\pm\mu_0$ and $\pm\frac{1}{\mu_0}$ involves the large-$\hat\mu$ asymptotics of $m^X(\xi,\hat\mu)$, which is given by (see \cite{L15}*{Appendix B})
\begin{equation}\label{mX}
m^X(\xi,\hat\mu)=I+\frac{\ii}{\hat\mu}\begin{pmatrix}
0 & -\beta_{\mu_0}(\xi) \\ \bar \beta_{\mu_0}(\xi) & 0
\end{pmatrix} +\ord(\hat\mu^{-2})
\end{equation}
with
\begin{equation}\label{beta}
\beta_{\mu_0}(\xi)=\sqrt{h_0}\eul^{\ii\left(\frac{\pi}{4}-\arg (-\bar r(\mu_0))+\arg\Gamma(\ii h_0)\right)},
\end{equation}
where $\Gamma$ is Euler's gamma function.
From \eqref{hat-mu}, \eqref{m0} and \eqref{mX} we have
\begin{align}\label{m0-as}
m_0^{-1}(y,t,\mu) 
&=D(\xi,t)(m^X)^{-1}(\xi, \hat\mu(\mu))D^{-1}(\xi,t)\notag\\
&=D(\xi,t)\left(I-\frac{\ii}{\hat\mu(\mu)}
	\begin{pmatrix}
	0 & -\beta_{\mu_0}(\xi) \\ \bar \beta_{\mu_0}(\xi) & 0
	\end{pmatrix}\right)
	D ^{-1}(\xi,t) +\ord(t^{-1})\notag \\
& =I + \frac{B(\xi,t)}{\sqrt{t}(\mu-\mu_0)} + \ord(t^{-1}),
\end{align}
where
\begin{equation}\label{B}
B(\xi,t)=\begin{pmatrix}
0 & B_0(\xi,t) \\ \bar  B_0(\xi,t) & 0
\end{pmatrix}
\quad\text{with}\ \
B_0(\xi,t)=\frac{\ii \delta_{\mu_0}^2(\xi,t)\beta_{\mu_0}(\xi)}{(1+\mu_0^{-2})\sqrt{2 f_0(\kappa_0(\mu_0))}}.
\end{equation}
Here the estimate $\ord(t^{-1})$ is uniform for $\xi$ and $\mu$ such that $\varepsilon_1<\xi<2-\varepsilon_1$ and $|\mu-\mu_0|=\varepsilon_2$ for any small positive $\varepsilon_j$, $j=1,2$.

\subsubsection{Asymptotics for $\hat m$}\label{s:as-hat-m}

In view of our algorithm for representing $u$ in terms of the solution of the associated regular RH problem, see \eqref{M-all-1}, \eqref{hat-M-i}, \eqref{u-and-x}, and \eqref{utilde}, we need to know the asymptotics for $\hat m(y,t,0)$, $\hat m(y,t,\ii)$, and $\hat m_1(y,t)$, where $\hat m_1$ is extracted from the expansion $\hat m(y,t,\mu)=\hat m(y,t,\ii) + \hat m_1(y,t)(\mu-\mu_0) + \ord((\mu-\mu_0)^2)$ as $\mu\to\mu_0$. By \eqref{m0-as} and the residue theorem, the leading contributions of the integral over $\gamma_0$ into \eqref{hat-m-as} for these quantities are, respectively,
\begin{equation}\label{contrib-mu0}
\frac{B}{\mu_0\sqrt{t}}, \quad \frac{B}{(\mu_0-\ii)\sqrt{t}} \ \ \text{and}\
\frac{B}{(\mu_0-\ii)^2\sqrt{t}}.
\end{equation}

In order to take into account the contributions of all small contours, we extend the definition of $m_0$ by symmetries (as indicated in Step (iii)). This gives
\begin{align}\label{mu0-as}
\hat m(y,t,0) 
&=I +\left(\frac{B}{\mu_0}- \frac{\bar B}{\mu_0} -\frac{1}{\mu_0^2}\frac{\bar B}{\mu_0^{-1}}+ \frac{1}{\mu_0^2}\frac{B}{\mu_0^{-1}}\right)\frac{1}{\sqrt{t}}+\osmall(t^{-1/2})\notag\\
&=I+\frac{4i\Im {B_0}(\xi,t)}{\mu_0\sqrt{t}}\begin{pmatrix}0&1\\-1& 0\end{pmatrix}+\osmall(t^{-1/2}),
\end{align}
\begin{align}\label{mu-i-as}
\hat m(y,t,\ii) 
&=I+\left(\frac{B}{\mu_0-\ii}+ \frac{\bar B}{-\mu_0-\ii} -\frac{1}{\mu_0^2}\frac{\bar B}{\mu_0^{-1}-\ii}
- \frac{1}{\mu_0^2}\frac{B}{-\mu_0^{-1}-\ii}
\right)\frac{1}{\sqrt{t}}+\osmall(t^{-1/2})\notag\\
&=I+\frac{2\ii\Im {B_0}(\xi,t)}{\mu_0\sqrt{t}} \begin{pmatrix}
0 & 1 \\ -1 & 0
\end{pmatrix} + \osmall(t^{-1/2}),
\end{align}
and
\begin{align}\label{mu1-as}
\hat m_1(y,t) &=\left(
\frac{B}{(\mu_0-\ii)^2}+ \frac{\bar B}{(-\mu_0-\ii)^2}
-\frac{1}{\mu_0^2}\frac{\bar B}{(\mu_0^{-1}-\ii)^2}
- \frac{1}{\mu_0^2}\frac{B}{(-\mu_0^{-1}-\ii)^2}
\right)\frac{1}{\sqrt{t}}+\osmall(t^{-1/2})\notag\\
&={\frac{4}{\sqrt{t}} \begin{pmatrix}
0 & \Re\frac{B_0}{(\mu_0-\ii)^2} \\ \Re\frac{\bar B_0}{(\mu_0-\ii)^2} & 0\end{pmatrix}} + \osmall(t^{-1/2}).
\end{align}

\subsubsection{From $\hat m$ back to $M^{R}$}

In Section \ref{s:as-hat-m} we presented the large-time asymptotics of $\hat m(y,t,\mu)$ (and thus of $M^{(2)}(y,t,\mu)$) for the dedicated values of $\mu$. Since $P(y,t,0)=0$ whereas $P(y,t,\mu)$ tends to $I$ exponentially fast, as $t\to+\infty$ for all $\mu$ close to $\ii$, in order to obtain the leading terms of the asymptotics for $M^{R}(y,t,\mu) =M^{(1)}(y,t,\mu)\delta^{\sigma_3}(\mu,\xi)=M^{(2)}(y,t,\mu)P^{-1}(y,t,\mu)\delta^{\sigma_3}(\mu, \xi)$, we need to know $\delta(\mu,\xi)$ \eqref{delta} for $\mu=0$ and $\mu$ near $\ii$.

Due to the symmetry $|r(\mu)|=|r(-\mu)|$ we have
\begin{equation}\label{delta-0}
\delta(0, \xi)=\exp\left\{\frac{1}{2\pi\ii}\int_{\Sigma_b(\xi)}
\frac{\ln(1-|r(s)|^2)}{s}\dd s\right\} \equiv 1.
\end{equation}
As $\mu\to \ii$, denote $\delta(\mu, \xi)=\eul^{I_0 + I_1(\mu-\ii)+ \dots}$. Then (using again the symmetry $|r(\mu)|=|r(-\mu)|$)
\[
I_0=\frac{1}{2\pi\ii}\int_{\Sigma_b(\xi)}
\frac{\ln(1-|r(s)|^2)}{s-\ii}\dd s=\frac{1}{\pi}\int_{1/\mu_0}^{\mu_0}
\frac{\ln(1-|r(s)|^2)}{s^2+1}\dd s.
\]
On the other hand,
\begin{align*}
I_1&=\frac{1}{2\pi\ii}\int_{1/\mu_0}^{\mu_0}\ln(1-|r(s)|^2)\left(\frac{1}{(s-\ii)^2}+\frac{1}{(-s-\ii)^2}\right)\dd s\\ 
&=\frac{1}{\pi\ii}\int_{1/\mu_0}^{\mu_0}
\ln(1-|r(s)|^2)\frac{s^2-1}{(s^2+1)^2}\dd s \equiv 0,
\end{align*}
the latter equality being due to the symmetry $|r(\mu)|=|r(\mu^{-1})|$.
Therefore,
\begin{equation}\label{delta-i}
\delta(\mu, \xi)= \delta(\ii, \xi)+ \ord((\mu-\ii)^2) \ \  \text{with}\
\delta(\ii, \xi)=
\exp\left\{\frac{1}{\pi}\int_{1/\mu_0}^{\mu_0}
\frac{\ln(1-|r(s)|^2)}{s^2+1}\dd s
\right\}.
\end{equation}
Therefore, we have the following asymptotics for $M^{R}(y,t,0)$,  $M^{R}(y,t,\ii)$, and $M^{R}_1(y,t)$, where $M^{R}(y,t,\mu)=M^{R}(y,t,\ii)+M^{R}_1(y,t)(\mu-\ii)+\ord((\mu-\ii)^2)$:
\begin{equation}\label{M-R-as}
\begin{split}
M^{R}(y,t,0) 
& =\hat m(y,t,0)=I+\frac{4\ii\Im B_0(\xi,t)}{\mu_0\sqrt{t}} \begin{pmatrix}
	0 & 1 \\ -1 & 0
	\end{pmatrix} + \osmall(t^{-1/2}), \\
M^{R}(y,t,\ii) 
&=\hat m(y,t,\ii)\delta^{\sigma_3}(\ii,\xi)+\ord(\eul^{-\varepsilon t})\\
&= \left(I+\frac{2\ii\Im B_0(\xi,t)}{\mu_0\sqrt{t}} \begin{pmatrix}
	0 & 1 \\ -1 & 0
\end{pmatrix}\right)\delta^{\sigma_3}(\ii,\xi)+ \osmall(t^{-1/2}), \\
M^{R}_1(y,t) 
&=\hat m_1(y,t)\delta^{\sigma_3}(\ii,\xi)+\ord(\eul^{-\varepsilon t})\\ &=\frac{4}{\sqrt{t}} \begin{pmatrix}
	0 & \Re\frac{B_0}{(\mu_0-\ii)^2} \\ \Re\frac{\bar B_0}{(\mu_0-\ii)^2} & 0
\end{pmatrix}\delta^{\sigma_3}(\ii,\xi) + \osmall(t^{-1/2}),
\end{split}
\end{equation}
where $B_0(\xi,t)$ is given by \eqref{B} and $\delta(\ii,\xi)$
is given by \eqref{delta-i}.

\subsubsection{Large-time asymptotics of $u$}

Combining the asymptotics for $M^R(\mu)$ \eqref{M-R-as} with \eqref{hat-M-i}, \eqref{u-and-x}, \eqref{M-to-tilde}, and \eqref{to-M-reg}, we can obtain the leading term of the large-time asymptotics of $u(x,t)$.

Introducing $\eta:=\frac{2\Im B_0}{\mu_0\sqrt{t}}$,
from  \eqref{M-R-as}  we have:
\begin{equation}\label{Delta-as}
\Delta(y,t)=\sigma_1 [M^R(y,t,0)]^{-1} =
\begin{pmatrix}
2\ii \eta & 1 \\ 1 & -2\ii\eta
\end{pmatrix} + \osmall(t^{-1/2}).
\end{equation}
Therefore, for
\begin{equation}\label{M-all}
M(\mu)=\left(I-\frac{1}{\mu}\sigma_1\right)^{-1} \left(I-\frac{1}{\mu}\Delta\right)
M^R(\mu)
\end{equation}
we have $M(\mu)=I_1(\mu) I_2(\mu) M^R(\mu) +\osmall(t^{-1/2})$,
where
\begin{subequations}\label{I123}
\begin{align}
\label{I1}
I_1(\mu) &=\begin{pmatrix}
\frac{\mu^2}{\mu^2-1} & \frac{\mu}{\mu^2-1} \\
\frac{\mu}{\mu^2-1} & \frac{\mu^2}{\mu^2-1}
\end{pmatrix}=\begin{pmatrix}
\frac{1}{2} & -\frac{\ii}{2} \\
-\frac{\ii}{2} & \frac{1}{2}
\end{pmatrix} -\frac{\ii}{2} I (\mu-\ii) +\ord((\mu-\ii)^2), \\
\label{I2}
I_2(\mu) &=\begin{pmatrix}
1-\frac{2\ii \eta}{\mu} & -\frac{1}{\mu} \\
-\frac{1}{\mu} & 1+\frac{2\ii \eta}{\mu}
\end{pmatrix}\notag\\
&=\begin{pmatrix}
1-2\eta & \ii \\
\ii & 1+2\eta
\end{pmatrix} + \begin{pmatrix}
-2\ii \eta  & -1 \\
-1 & 2\ii \eta
\end{pmatrix} (\mu-\ii) +\ord((\mu-\ii)^2),\\
\label{I3}
M^R(\mu) &= \begin{pmatrix}
1 & \ii \eta \\
-\ii \eta  & 1
\end{pmatrix}\delta^{\sigma_3}(\ii) + \begin{pmatrix}
0 & \beta_1 \\
\beta_2 & 0
\end{pmatrix}\delta^{\sigma_3}(\ii) (\mu-\ii) +\ord((\mu-\ii)^2),
\end{align}
\end{subequations}
with
\begin{equation}\label{betas}
\beta_1=\frac{4}{\sqrt{t}}\Re\frac{B_0}{(\mu_0-\ii)^2}, \quad
\beta_2=\frac{4}{\sqrt{t}}\Re\frac{\bar B_0}{(\mu_0-\ii)^2}.
\end{equation}
Substituting \eqref{I123} into \eqref{M-all} and keeping the terms
of order $t^{-1/2}$ we have
\[
M(\mu)=\begin{pmatrix}
(1 - \eta)\delta(\ii) & 0 \\
0  & (1 + \eta)\delta^{-1}(\ii)
\end{pmatrix} +
\begin{pmatrix}
0 & (\beta_1 + \eta)\delta^{-1}(\ii) \\
(\beta_2 - \eta)\delta(\ii)  & 0
\end{pmatrix}(\mu-\ii) +\osmall((\mu-\ii)t^{-1/2})
\]
and thus (see \eqref{hat-M-i})
\[
a_1=(1 - \eta)\delta(\ii) + \osmall(t^{-1/2}),
\ \ a_2=(\beta_1 + \eta)\delta^{-1}(\ii) + \osmall(t^{-1/2}),
\ \ a_3=(\beta_2 - \eta)\delta(\ii) + \osmall(t^{-1/2}).
\]
It follows (see \eqref{u-and-x}) that
\begin{subequations}\label{u-x-as}
\begin{alignat}{3}
\label{u-y-as}
\hat u(y,t) &=-(\beta_1+\beta_2)+\osmall(t^{-1/2}) =
\frac{8(1-\mu_0^2)}{(1+\mu_0^2)^2\sqrt{t}}\Re B_0 +\osmall(t^{-1/2}),\\
\label{x-y-as}
x(y,t) &=y+2\ln((1-\eta)\delta(\ii)) +  \osmall(t^{-1/2}) =
y+y_0(\xi) +\ord(t^{-1/2}),
\end{alignat}
\end{subequations}
where (see \eqref{delta-i})
$y_0(\xi) =\frac{2}{\pi}\int_{1/\mu_0}^{\mu_0}
\frac{\ln(1-|r(s)|^2)}{s^2+1}\dd s$.

Recalling the definition \eqref{B} of $B_0$ and introducing the real-valued functions $\varphi_\delta(\xi,t)$ and $\varphi_\beta(\xi)$ (see \eqref{beta} and \eqref{d-mu}) by
\[
\beta_{\mu_0}(\xi)=\sqrt{h_0}\eul^{\ii \varphi_\beta(\xi)}, \quad
\delta_{\mu_0}^2(\xi,t)= \eul^{\ii \varphi_\delta(\xi,t)}
\]
we have $B_0=\frac{\sqrt{h_0}}{(1+\mu_0^{-2})\sqrt{2 f_0}}\eul^{\ii(\frac{\pi}{2}
	+\varphi_\delta (\xi,t))+\varphi_\beta(\xi))}$ and thus
\begin{equation}\label{B1}
	\Re B_0(\xi,t)=\frac{\sqrt{h_0}}{(1+\mu_0^{-2})\sqrt{2 f_0}}
	\cos\left\{\frac{\pi}{2}
	+\varphi_\delta (\xi,t)+ \varphi_\beta(\xi)\right \}.
\end{equation}
Substituting \eqref{B1} into \eqref{u-y-as} gives the asymptotics of the solution of the Cauchy problem for the mCH equation (in the form \eqref{mCH2}) expressed parametrically, in the $(y,t)$ variables. Recalling the definitions of $f_0$, $\varphi_\delta$, $\varphi_\beta$, $\beta_{\mu_0}$ (see \eqref{f0-hat-theta}, \eqref{d-mu}, \eqref{beta}) and the relationship \eqref{kappa-mu} between $\mu_0$ and $\kappa_0$ we obtain the following large-time asymptotics along the rays $\frac{y}{t}=\xi$ for $0<\xi<2$:
\begin{equation}\label{as-hat-u}
\hat u(y,t)=\frac{C_1(\xi)}{\sqrt{t}}\cos\left\{C_2(\xi) t + C_3(\xi)\ln t + C_4(\xi)\right\} +\osmall(t^{-1/2}),
\end{equation}
where
\begin{subequations}\label{C1-4}
\begin{alignat}{3}
\label{C1}
C_1(\xi)&=-\left(\frac{8h_0\kappa_0}{3-4\kappa_0^2}\right)^{\frac{1}{2}},\\
\label{C2}
C_2(\xi)&=\frac{32\kappa_0^3}{(1+4\kappa_0^2)^2}, \\
\label{C3}
C_3(\xi)&=-h_0,\\
\label{C4}
C_4(\xi)&= \frac{3\pi}{4}-\frac{1}{\pi}\left(\int_{-\mu_0}^{-1/\mu_0}
+ \int^{\mu_0}_{1/\mu_0}\right)\ln\frac{1-|r(s)|^2}{1-|r(\mu_0)|^2}\frac{\dd s}{s-\mu_0}
-h_0\ln \frac{128\kappa_0^3(3-4\kappa_0^2)}{(1+4\kappa_0^2)^3}\notag \\
&\quad -\arg(-\bar r(\mu_0))+\arg\Gamma(\ii h_0),
\end{alignat}
\end{subequations}
taking into account that $h_0$, $\kappa_0$, and $\mu_0$ are defined as functions of $\xi$.

In order to express the asymptotics of $\tilde u(x,t)=\hat u(y(x,t),t)$ in the $(x,t)$ variables, we notice that \eqref{x-y-as} reads
\[
\frac{y}{t}=\frac{x}{t}-\frac{y_0}{t}+\ord(t^{-3/2})
\]
and thus introducing $\zeta:=\frac{x}{t}$ gives $C_j(\xi)=C_j(\zeta)+\ord(t^{-1})$, $j=1,\dots,4$ and
\[
C_2(\xi)t=C_2(\zeta)t-\frac{\dd C_2}{\dd\zeta}(\zeta)y_0(\zeta)+\osmall(1).
\]
It follows that the leading term of the asymptotics for $\tilde u(x,t)$ can be obtained from the r.h.s.\ of \eqref{as-hat-u}, where 
\begin{enumerate}[(i)]
\item
$C_j(\xi)$ are replaced by $C_j(\zeta)$ for $j=1,2,3$, and
\item
$C_4(\xi)$ is replaced by $\tilde C_4(\zeta):=C_4(\zeta)-C_2'(\zeta)y_0(\zeta)$.
\end{enumerate}
In turn, calculating $C_2'(\zeta)$ in terms of $\kappa_0(\zeta)$
and using \eqref{C2} and
$\zeta=\frac{2-8\kappa_0^2}{(1+4\kappa_0^2)^2}$,
we get $C_2'(\zeta)=-2\kappa_0$ and thus
\begin{equation}\label{C4-1}
\tilde C_4(\zeta)=C_4(\zeta) +\frac{4\kappa_0(\zeta)}{\pi}
\int_{1/\mu_0}^{\mu_0}
\frac{\ln(1-|r(s)|^2)}{s^2+1}\dd s.
\end{equation}

The asymptotic analysis we have presented above can be summarized in the following

\begin{theorem}\label{th1}
In the solitonless case, the solution $\tilde u(x,t)$ of the Cauchy problem for the mCH equation in the form \eqref{mCH2} has the following large-time asymptotics along the rays $\frac{x}{t}\eqqcolon\zeta$ in the sector of the $(x,t)$ half-plane $0<\zeta<2$:
\begin{equation}\label{as-tilde-u}
\tilde u(x,t)=\frac{C_1(\zeta)}{\sqrt{t}}\cos\left\{C_2(\zeta) t + C_3(\zeta)\ln t+\tilde C_4(\zeta)\right\}+\osmall(t^{-1/2})
\end{equation}
with $C_1,C_2,C_3$ defined by \eqref{C1}-\eqref{C3}, and $\tilde C_4$ defined by \eqref{C4-1}-\eqref{C4}. Moreover, in these definitions $h_0=-\frac{1}{2\pi}\ln(1-|r(\mu_0)|^2)$, $\kappa_0(\zeta)=\left(\frac{\sqrt{1+4\zeta}-1-\zeta}{4\zeta}\right)^{\frac{1}{2}}$, and $\mu_0(\zeta)>1$ is characterized by the relation $\kappa_0(\zeta)=\frac{1}{4}(\mu_0(\zeta)-\mu_0(\zeta)^{-1})$.
\end{theorem}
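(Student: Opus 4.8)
The statement collects the outcome of the chain of reductions and the steepest-descent analysis carried out in Sections~\ref{sec:2}--\ref{s:as-reg}, so the proof is essentially a bookkeeping of the pieces already assembled, and the plan is to make that bookkeeping explicit. By Propositions~\ref{p1} and \ref{p2}, $\tilde u(x,t)=\hat u(y(x,t),t)$ is recovered from the solution $M^R$ of the \emph{regular} RH problem (R1)--(R3) through $M=(I-\frac1\mu\sigma_1)^{-1}(I-\frac1\mu\Delta)M^R$ with $\Delta=\sigma_1[M^R(y,t,0)]^{-1}$, followed by the expansion \eqref{hat-M-i} of $M$ at $\mu=\ii$ and the formulas \eqref{u-and-x}. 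Hence it suffices to produce, uniformly along the rays $y/t=\xi\in(\varepsilon,2-\varepsilon)$, the large-time asymptotics of $M^R(y,t,\mu)$ at $\mu=0$, at $\mu=\ii$, and of the coefficient of $(\mu-\ii)$ in its Taylor expansion at $\mu=\ii$.

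First I would run the Deift--Zhou scheme on $M^R$: introduce the phase $\theta(\mu,\xi)$ of \eqref{xi-k-theta} and its signature table, which for $0<\xi<2$ has the four real stationary points $\pm\mu_0,\pm\mu_0^{-1}$ and the contour $\Sigma_b(\xi)$ of \eqref{Sigma-b}; remove the diagonal middle factor of the factorization \eqref{triang-b} by means of the scalar function $\delta(\mu,\xi)$ of \eqref{delta}, passing to $M^{(1)}=M^R\delta^{-\sigma_3}$; and deform the contour onto $\Sigma=\cup_{j=1}^4\Sigma_j$ of Figure~\ref{f1}, absorbing the triangular factors with good $t\to+\infty$ decay, which yields $M^{(2)}$ whose jump tends to $I$ off the stationary points. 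Around each of $\pm\mu_0,\pm\mu_0^{-1}$ I would insert the explicit local parametrix $m_0$ of \eqref{m0} built from parabolic cylinder functions following \cite{L15}, with the scaling \eqref{hat-mu} and the $\delta$--$\theta$ approximation \eqref{d-e-appr}, and pass to $\hat m$, which solves a small-norm RH problem on $\hat\Sigma$. Steps~(i)--(v) then give, via the singular integral equation behind \eqref{hat-m}, that $\hat m=I$ plus the sum of the Cauchy transforms of $m_0^{-1}-I$ over the small circles plus $\osmall(t^{-1/2})$ uniformly for $\xi\in(\varepsilon,2-\varepsilon)$; evaluating the residues at the four stationary points via \eqref{m0-as}--\eqref{B} and combining the four contributions through the symmetries of Step~(iii) produces \eqref{mu0-as}--\eqref{mu1-as} for $\hat m(y,t,0)$, $\hat m(y,t,\ii)$ and $\hat m_1(y,t)$.

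Next I would translate back through $M^R=M^{(2)}P^{-1}\delta^{\sigma_3}$: the factor $P$ equals $I$ at $\mu=0$ and $I+\ord(\eul^{-\varepsilon t})$ for $\mu$ near $\ii$, while $\delta(0,\xi)\equiv1$ and $\delta(\mu,\xi)=\delta(\ii,\xi)+\ord((\mu-\ii)^2)$ by the parity symmetries $|r(\mu)|=|r(-\mu)|=|r(\mu^{-1})|$ (the computation \eqref{delta-0}--\eqref{delta-i}); this turns \eqref{mu0-as}--\eqref{mu1-as} into \eqref{M-R-as}. Reading off $\Delta$ from $M^R(y,t,0)$ as in \eqref{Delta-as}, expanding the three factors \eqref{I1}--\eqref{I3} of $M=(I-\frac1\mu\sigma_1)^{-1}(I-\frac1\mu\Delta)M^R$ at $\mu=\ii$ and keeping the $t^{-1/2}$ terms gives $a_1,a_2,a_3$ and hence, through \eqref{u-and-x}, the parametric asymptotics \eqref{u-y-as}--\eqref{x-y-as}; inserting the polar form of $B_0$ (from \eqref{B}, \eqref{beta}, \eqref{d-mu}) yields \eqref{as-hat-u} with the constants \eqref{C1}--\eqref{C4}. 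Finally, \eqref{x-y-as} reads $y/t=x/t-y_0(\zeta)/t+\ord(t^{-3/2})$, so one may replace $\xi$ by $\zeta=x/t$ in $C_1,C_2,C_3$ at the cost of an $\ord(t^{-1})$ error, while the $C_2(\xi)t$ term acquires the shift $-C_2'(\zeta)y_0(\zeta)$; computing $C_2'(\zeta)=-2\kappa_0$ from \eqref{C2} and $\zeta=(2-8\kappa_0^2)/(1+4\kappa_0^2)^2$ converts $C_4$ into the $\tilde C_4$ of \eqref{C4-1}, which is exactly \eqref{as-tilde-u}.

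The main obstacle is the rigorous control of the error terms: showing that the contribution of the jump on $\Sigma$ away from the stationary points and of the mismatch between $m_0$ and the true solution on the small circles is genuinely $\osmall(t^{-1/2})$, uniformly for $\xi$ in compact subintervals of $(0,2)$. This rests on the resolvent bound for the operator $\C{C}_{\hat w}=\C{C}_-(\,\cdot\,\hat w)$ on $L^2(\hat\Sigma)$ and on the $t$-decay of $\|\hat w\|$ off the circles, for which I would invoke the detailed estimates of \cite{L15} (together with the accompanying rational-approximation argument if the analyticity assumption on $r$ is dropped). A secondary, purely computational, difficulty is the careful accounting of the four symmetric stationary points and of the several $\delta$- and $\Delta$-factors, so that the real/imaginary-part structure visible in \eqref{mu0-as}--\eqref{M-R-as} and the final phase $\tilde C_4$ come out with the correct signs.
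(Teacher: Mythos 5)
Your proposal follows exactly the route of the paper: reduction to the regular RH problem via Propositions~\ref{p1}--\ref{p2}, the Deift--Zhou deformations with the scalar function $\delta$, parabolic-cylinder parametrices at the four symmetric stationary points, the evaluations \eqref{mu0-as}--\eqref{M-R-as} and \eqref{I1}--\eqref{I3} leading to \eqref{u-x-as} and \eqref{as-hat-u}, and the final change of variables $y\mapsto x$ producing $\tilde C_4$ via $C_2'(\zeta)=-2\kappa_0$, with error control delegated to \cite{L15}. This is correct and coincides with the paper's own argument, including the treatment of the error terms.
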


By using the relation \eqref{utilde} between $\tilde u$ and $u$ we immediately obtain, as a corollary, the large-time asymptotics for $u(x,t)$ in the sector $1<\frac{x}{t}<3$. 

\begin{theorem}[$1^{\text{st}}$ oscillatory region]\label{th2}
In the solitonless case, the solution $u(x,t)$ of the Cauchy problem \eqref{mCH1-ic} for the mCH equation has the following large-time asymptotics in the sector of the $(x,t)$ half-plane defined by $1<\zeta\coloneqq\frac{x}{t}<3$:
\begin{equation}\label{as-u}
u(x,t)=1+\frac{C_1(\zeta-1)}{\sqrt{t}}\cos\left\{C_2(\zeta-1) t + C_3(\zeta-1)\ln t + \tilde C_4(\zeta-1)\right\}+\osmall(t^{-1/2}).
\end{equation}
The error term is uniform in any sector $1+\varepsilon<\zeta<3-\varepsilon$ where $\varepsilon$ is a small positive number.
\end{theorem}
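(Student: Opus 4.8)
The plan is to obtain Theorem~\ref{th2} directly from Theorem~\ref{th1} by means of the elementary change of variables \eqref{utilde}. Recall that $u(x,t)=\tilde u(x-t,t)+1$, so that the large-time behaviour of $u$ along a ray $\frac{x}{t}=\zeta$ is governed by the large-time behaviour of $\tilde u$ along the corresponding ray in the $(y,t)$ half-plane, where $y\coloneqq x-t$. First I would observe that $\frac{y}{t}=\frac{x-t}{t}=\zeta-1$, so the condition $1<\zeta<3$ is equivalent to $0<\frac{y}{t}<2$, i.e.\ precisely the sector covered by Theorem~\ref{th1}.

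Next I would apply Theorem~\ref{th1} to $\tilde u(y,t)$ with $\xi=\frac{y}{t}=\zeta-1$. This gives
\[
\tilde u(x-t,t)=\frac{C_1(\zeta-1)}{\sqrt t}\cos\bigl\{C_2(\zeta-1)t+C_3(\zeta-1)\ln t+\tilde C_4(\zeta-1)\bigr\}+\osmall(t^{-1/2}),
\]
and adding $1$ to both sides yields \eqref{as-u}. The functions $C_1,C_2,C_3,\tilde C_4$, together with $h_0$, $\kappa_0$, and $\mu_0$, are the ones already introduced in Theorem~\ref{th1}, now evaluated at the shifted argument $\zeta-1$.

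The only point requiring a word of care is the uniformity of the error term. The remainder in Theorem~\ref{th1} inherits its uniformity from the estimates recorded in Step~(v), which hold uniformly for $\xi$ in any compact subinterval $\varepsilon<\xi<2-\varepsilon$; under $\xi=\zeta-1$ this becomes uniformity for $1+\varepsilon<\zeta<3-\varepsilon$, exactly as claimed. I would also stress that, in contrast with the reparametrization $y\mapsto x$ used inside the proof of Theorem~\ref{th1} (which produced the correction $-C_2'(\zeta)y_0(\zeta)$ absorbed into $\tilde C_4$), the shift $x\mapsto x-t$ is \emph{exact} and introduces no lower-order correction in the oscillatory phase $C_2 t$, so no further adjustment is needed. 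Consequently there is essentially no obstacle here: the statement is a verbatim transcription of Theorem~\ref{th1}, and the only thing to keep track of is that the linear-in-$t$ phase $C_2(\zeta-1)t$ is sensitive to the precise ray parameter, so all $\xi$-dependent quantities must be evaluated at the exact value $\zeta-1$ — which is automatic, since Theorem~\ref{th1} is stated along exact rays $\frac{y}{t}=\xi$.
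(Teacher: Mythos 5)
Your proposal is correct and is essentially the paper's own argument: Theorem~\ref{th2} is obtained from Theorem~\ref{th1} as an immediate corollary via the exact substitution \eqref{utilde}, $u(x,t)=\tilde u(x-t,t)+1$, which shifts the ray parameter to $\zeta-1$ and maps $0<\xi<2$ onto $1<\zeta<3$. Your remarks on the uniformity inherited from Step~(v) and on the absence of any extra phase correction (since the shift is exact, unlike the $y\mapsto x$ reparametrization inside the proof of Theorem~\ref{th1}) are accurate and consistent with the paper.
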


\subsection{Range $-\frac{1}{4}<\xi<0$}

This range is characterized by the presence of eight real critical points: $\pm\mu_0$, $\pm\mu_1$, $\pm\mu_0^{-1}$, and $\pm\mu_1^{-1}$, see Figure \ref{f2}. Similarly to the range $0<\xi<2$, we proceed, first, by evaluating the contribution to \eqref{hat-m-as} from $\gamma_0$ and $-\gamma_1$ and then by using the symmetries $\mu\mapsto -\mu$ and $\mu\mapsto 1/\mu$. Notice that choosing $-\gamma_1$ surrounding $-\mu_1$ is suggested by the structure of $\Sigma_b(\xi)$ \eqref{Sigma-b}: the parts of $\Sigma_b(\xi)$ ending at $\mu_0$ and at $-\mu_1$ are located to the left of these points. This implies that the construction of the local approximation near $-\mu_1$ follows exactly the same lines as for $\mu_0$, the only difference being in the contributions to the r.h.s.\ of \eqref{delta-mu} from other critical points.

Namely, from \eqref{delta} we have
\begin{equation}\label{delta-mu-1}
\delta(\mu,\xi) =
	\left(\frac{\mu-\mu_0}{\mu-\mu_0^{-1}}\right)^{\ii h_0}
	\left(\frac{\mu+\mu_0^{-1}}{\mu+\mu_0}\right)^{\ii h_0}
	\left(\frac{\mu-\mu_1^{-1}}{\mu+\mu_1^{-1}}\right)^{\ii h_1}
	\left(\frac{\mu+\mu_1}{\mu_1-\mu}\right)^{\ii h_1}
	\eul^{\chi(\mu)},
\end{equation}
where $h_j=-\frac{1}{2\pi}\ln(1-|r(\mu_j)|^2)$, $j=0,1$ and
\begin{align}\label{chi-1}
\chi(\mu) &= \frac{1}{2\pi\ii}\left\{-\int_{-\infty}^{-\mu_1}\ln(\mu-s)\dd \ln(1-|r(s)|^2)+\left(\int_{-\mu_0}^{-\mu_0^{-1}}+ \int_{\mu_0^{-1}}^{\mu_0}\right) \ln\frac{1-|r(s)|^2}{1-|r(\mu_0)|^2}\frac{\dd s}{s-\mu}
\right.\notag \\
&\quad\qquad\left. +\int_{-\mu_1^{-1}}^{\mu_1^{-1}}\ln\frac{1-|r(s)|^2}{1-|r(\mu_1)|^2}\frac{\dd s}{s-\mu}-\int_{\mu_1}^{+\infty}\ln(s-\mu)\dd \ln(1-|r(s)|^2)\right\}.
\end{align}
Thus, using $\kappa_0(\mu_0)$, $f_0(\kappa_0(\mu_0))$, (see \eqref{kappa-mu}, \eqref{f0-hat-theta}), and similarly for $\kappa_1(\mu_1)$ and $f_1(\kappa_1(\mu_1))$
\[
\delta(\mu,\xi)\approx \hat\mu^{\ii h_0}(128 f_0\kappa_0^2 t)^{-\frac{\ii h_0}{2}}\left(\frac{\kappa_1 + \kappa_0}{\kappa_1 - \kappa_0}\right)^{\ii h_1}\eul^{\chi(\mu_0)}\text{ with }
\hat\mu=(\mu-\mu_0)\left(1+\frac{1}{\mu_0^{2}}\right)\sqrt{2f_0 t}.
\]
for $\mu$ near $\mu_0$ and
\[
\delta(\mu,\xi)\approx \hat\mu^{\ii h_1}(-128 f_1\kappa_1^2 t)^{-\frac{\ii h_1}{2}}\left(\frac{\kappa_1 + \kappa_0}{\kappa_1 - \kappa_0}\right)^{\ii h_0}\eul^{\chi(-\mu_1)}\text{ with }
\hat\mu=(\mu+\mu_1)\left(1+\frac{1}{\mu_1^{2}}\right)\sqrt{-2f_1 t}
\]
for $\mu$ near $-\mu_1$ (notice that $f_0(\kappa_0)=\frac{\kappa_0(3-4\kappa_0^2)}{(1+4\kappa_0^2)^3}>0$ whereas $f_1(\kappa_1)=\frac{\kappa_1(3-4\kappa_1^2)}{(1+4\kappa_1^2)^3}<0$). Consequently, the coefficients $\delta_{\mu_0}(\xi,t)$ and $\delta_{\mu_1}(\xi,t)$ to be used in the construction of $m_0$ \eqref{m0} for $\mu$ near $\mu_0$ and $-\mu_1$, respectively, are as follows:
\begin{equation}\label{d-mu-01}
\begin{split}
\delta_{\mu_0}(\xi,t) & =
\eul^{-\ii t \hat\theta(\kappa_0)}
\eul^{\chi(\mu_0)}
\left(\frac{\kappa_1 + \kappa_0}{\kappa_1 - \kappa_0}\right)^{\ii h_1}
(128 f_0\kappa_0^2(\mu_0) t)^{-\frac{\ii h_0}{2}},\\
\delta_{\mu_1}(\xi,t) & =
\eul^{\ii t \hat\theta(\kappa_1)}
\eul^{\chi(-\mu_1)}
\left(\frac{\kappa_1 + \kappa_0}{\kappa_1 - \kappa_0}\right)^{\ii h_0}
(-128 f_1\kappa_1^2(\mu_1) t)^{-\frac{\ii h_1}{2}},
\end{split}
\end{equation}
which implies (cf. \eqref{m0-as})
\begin{alignat*}{2}
m_0^{-1}(y,t,\mu) 
&=I + \frac{B_{\mu_0}(\xi,t)}{\sqrt{t}(\mu-\mu_0)}+\ord(t^{-1}),
&\quad&\mu\text{ inside }\gamma_0,\\
m_0^{-1}(y,t,\mu) 
&=I+\frac{B_{\mu_1}(\xi,t)}{\sqrt{t}(\mu+\mu_1)}+\ord(t^{-1}),
&&\mu\text{ inside }-\gamma_1,
\end{alignat*}
where (cf.\eqref{B})
\[
B_{\mu_0}(\xi,t)=\begin{pmatrix}
0 & B_0(\xi,t) \\ \bar  B_0(\xi,t) & 0
\end{pmatrix},\quad
B_{\mu_1}(\xi,t)=\begin{pmatrix}
0 & B_1(\xi,t) \\ \bar  B_1(\xi,t) & 0
\end{pmatrix},
\]
with
\begin{equation}\label{B-mu-01}
\begin{split}
B_0(\xi,t)&=\left(\frac{\kappa_1+\kappa_0}{\kappa_1-\kappa_0}\right)^{2\ii h_1}\frac{\ii\delta_{\mu_0}^2(\xi,t)\beta_{\mu_0}(\xi)}{(1+\mu_0^{-2})\sqrt{2f_0(\kappa_0)}},\\
B_1(\xi,t)&=\left(\frac{\kappa_1+\kappa_0}{\kappa_1 - \kappa_0}\right)^{2\ii h_0}\frac{\ii \delta_{\mu_1}^2(\xi,t)\beta_{\mu_1}(\xi)}{(1+\mu_1^{-2})\sqrt{-2 f_1(\kappa_1)}}.
\end{split}
\end{equation}
Here $\beta_{\mu_0}(\xi)$ is given by \eqref{beta} and
\[
\beta_{\mu_1}(\xi)=\sqrt{h_1}\eul^{\ii\left(\frac{\pi}{4}-\arg (-\bar r(-\mu_1))+\arg\Gamma(\ii h_1)\right)}.
\]
In turn, due to the symmetries, the asymptotics for $\hat m(y,t,0)$, $\hat m(y,t,\ii)$, and $\hat m_1(y,t)$ (and thus for $M^{R}(y,t,0)$,  $M^{R}(y,t,\ii)$, and $M^{R}_1(y,t)$) in the present case (cf. \eqref{mu0-as}-\eqref{mu1-as} and \eqref{M-R-as}) involve two terms:
\begin{align}\label{M-R-as-1}
M^{R}(y,t,0) &=I+\frac{4\ii}{\sqrt{t}}
\left(
\frac{\Im B_0(\xi,t)}{\mu_0} -\frac{\Im B_1(\xi,t)}{\mu_1}
\right)\begin{pmatrix}
	0 & 1 \\ -1 & 0
	\end{pmatrix}
	+ \osmall(t^{-1/2}), \notag\\
M^{R}(y,t,\ii) &
= \left(I+\frac{2\ii}{\sqrt{t}}
\left(
\frac{\Im B_0(\xi,t)}{\mu_0} -\frac{\Im B_1(\xi,t)}{\mu_1}
\right)
 \begin{pmatrix}
	0 & 1 \\ -1 & 0
\end{pmatrix}\right)\delta^{\sigma_3}(\ii,\xi)+ \osmall(t^{-1/2}), \\
M^{R}_1(y,t) &  =
\frac{4}{\sqrt{t}}\begin{pmatrix}
0 & \Re\frac{B_0}{(\mu_0-\ii)^2} +\Re\frac{B_1}{(\mu_1{+}\ii)^2}\\
\Re\frac{\bar B_0}{(\mu_0-\ii)^2} + \Re\frac{\bar B_1}{(\mu_1{+}\ii)^2} & 0\end{pmatrix}\delta^{\sigma_3}(\ii,\xi) + \osmall(t^{-1/2}),\notag
\end{align}
where  $\delta(\ii,\xi)$ is now given by
\begin{equation}\label{delta-i-1}
\delta(\ii,\xi)=\exp\left\{\frac{1}{\pi}\left(\int_{0}^{\mu_1^{-1}}+ \int^{\mu_0}_{\mu_0^{-1}}+\int_{\mu_1}^{+\infty}\right)\frac{\ln(1-|r(s)|^2)}{s^2+1}\dd s\right\}.
\end{equation}

It follows that the asymptotics for the parametric representation
of $\tilde u$, see \eqref{u-y-as} and \eqref{x-y-as},
takes the form
\begin{subequations}\label{u-x-as-1}
\begin{alignat}{3}
\label{u-y-as-1}
\hat u(y,t) &=\frac{8}{\sqrt{t}}\left(
\frac{(1-\mu_0^2)}{(1+\mu_0^2)^2}\Re B_0 + \frac{(1-\mu_1^2)}{(1+\mu_1^2)^2}\Re B_1
\right) +\osmall(t^{-1/2}),\\
\label{x-y-as-1}
x(y,t) & =
y+y_{01}(\xi) +\ord(t^{-1/2}),
\end{alignat}
\end{subequations}
where
$y_{01}(\xi)=\frac{2}{\pi}\left(\int_{0}^{\mu_1^{-1}} + \int^{\mu_0}_{\mu_0^{-1}}+\int_{\mu_1}^{+\infty}\right)\frac{\ln(1-|r(s)|^2)}{s^2+1}\dd s$.

Recalling the definitions \eqref{B-mu-01} of $B_j$, $j=0,1$, and arguing as in the case $0<\xi<2$, we arrive at the asymptotics of $\hat u(y,t)$ (cf.~\eqref{as-hat-u})
\begin{equation}\label{as-hat-u-1}
\hat u(y,t)=\sum_{j=0,1}\frac{C_1^{(j)}(\xi)}{\sqrt{t}}\cos\left\{C_2^{(j)}(\xi) t + C_3^{(j)}(\xi)\ln t + C_4^{(j)}(\xi)\right\} +\osmall(t^{-1/2}),
\end{equation}
where
\begin{subequations}\label{C1-4-01}
\begin{alignat}{3}
\label{C1-01}
C_1^{(j)}(\xi)&= -\left(\frac{8h_j\kappa_j}{|3-4\kappa_j^2|}\right)^{\frac{1}{2}},\\
\label{C2-01}
C_2^{(j)}(\xi)&= \frac{(-1)^j32\kappa_j^3}{(1+4\kappa_j^2)^2}, \\
\label{C3-01}
C_3^{(j)}(\xi)&= -h_j, \\
\label{C4-01}
C_4^{(j)}(\xi)&=\frac{3\pi}{4}-2\ii\chi((-1)^j\mu_j)-h_j\ln\frac{128\kappa_j^3|3-4\kappa_j^2|}{(1+4\kappa_j^2)^3}-\arg(-\bar r((-1)^j\mu_j))+\arg\Gamma(\ii h_j)\notag \\
&\quad + 2h_{1-j}\ln\frac{\kappa_1+\kappa_0}{\kappa_1-\kappa_0},
\end{alignat}
\end{subequations}
and $\chi(\mu)$ is given by \eqref{chi-1}.

Returning to the $(x,t)$ variables, $C_4^{(j)}(\xi)$, $j=0,1$ are to be replaced, similarly to \eqref{C4-1}, by
\begin{equation}\label{C4-1-01}
\tilde C_4^{(j)}(\zeta)=C_4^{(j)}(\zeta) +\frac{(-1)^j 4\kappa_j(\zeta)}{\pi}\left(\int_{0}^{\mu_1^{-1}}+\int_{\mu_0^{-1}}^{\mu_0}+\int_{\mu_1}^{+\infty}\right)\frac{\ln(1-|r(s)|^2)}{s^2+1}\dd s,
\end{equation}
which finally leads us to

\begin{theorem}\label{th3}
In the solitonless case, the solution $\tilde u(x,t)$ of the Cauchy problem for the mCH equation in the form \eqref{mCH2} has the following large-time asymptotics along the rays $\frac{x}{t}\eqqcolon\zeta$ in the sector of the $(x,t)$ half-plane $-\frac{1}{4}<\zeta<0$:
\[
\tilde u(x,t)=\sum_{j=0,1}\frac{C_1^{(j)}(\zeta)}{\sqrt{t}}\cos\left\{
C_2^{(j)}(\zeta) t + C_3^{(j)}(\zeta)\ln t + \tilde C_4^{(j)}(\zeta)\right\}+\osmall(t^{-1/2})
\]
with an error term uniform in any sector $-\frac{1}{4}+\varepsilon<\zeta<-\varepsilon$ where $\varepsilon$ is a small positive number. The coefficients $C_1^{(j)},C_2^{(j)},C_3^{(j)}$ are defined by \eqref{C1-01}-\eqref{C3-01} and $\tilde C_4^{(j)}$ is defined by \eqref{C4-1-01}-\eqref{C4-01}. In these definitions
\[
h_j=-\frac{1}{2\pi}\ln(1-|r(\mu_j)|^2),\quad
\kappa_0(\zeta)=\left(\frac{\sqrt{1+4\zeta}-1-\zeta}{4\zeta}\right)^{\frac{1}{2}},\quad
\kappa_1(\zeta)=\left(-\frac{\sqrt{1+4\zeta}+1+\zeta}{4\zeta}\right)^{\frac{1}{2}},
\]
and $\mu_j(\zeta)>1$, $j=0,1$ is characterized by the relation $\kappa_j(\zeta)=\frac{1}{4}(\mu_j(\zeta)-\mu_j(\zeta)^{-1})$.
\end{theorem}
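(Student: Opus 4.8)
The plan is to follow exactly the scheme that led to Theorem~\ref{th1}, the only genuinely new feature being that in the range $-\tfrac{1}{4}<\xi<0$ there are two \emph{relevant} stationary phase points of $\theta(\mu,\xi)$, namely $\mu_0$ and $-\mu_1$, together with their images $-\mu_0$, $\mu_1$, $\pm\mu_0^{-1}$, $\pm\mu_1^{-1}$ under the symmetries $\mu\mapsto-\mu$, $\mu\mapsto1/\mu$; consequently the dispersive part of the asymptotics will be a \emph{sum} of two modulated oscillatory contributions, one carried by $\mu_0$ and one by $-\mu_1$. First I would recall that the chain of transformations $M^R\mapsto M^{(1)}\mapsto M^{(2)}\mapsto\hat m$ carries the regular RH problem of Proposition~\ref{p2} into a small-norm RH problem on $\hat\Sigma$ whose jump matrix equals $I+\osmall(1)$ (in fact exponentially close to $I$) away from the eight small circles $\pm\gamma_j$, $\pm\gamma_j^{-1}$, and is $m_0^{-1}$ on those circles, with normalization $\hat m(\mu)\to I$ at infinity.

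Next, inside $\gamma_0$ and $-\gamma_1$ I would build the local parametrices $m_0$ out of the parabolic-cylinder model RH problem $m^X$ exactly as in \eqref{m0}, but now with the conjugating coefficients $\delta_{\mu_0}(\xi,t)$, $\delta_{\mu_1}(\xi,t)$ of \eqref{d-mu-01}; the extra cross-factors $\left(\tfrac{\kappa_1+\kappa_0}{\kappa_1-\kappa_0}\right)^{\ii h_{1-j}}$ there record the regular (non-singular) contribution of the \emph{other} band of $\Sigma_b(\xi)$ to $\delta(\mu,\xi)$ near each active point, which is read off from the factorization \eqref{delta-mu-1}--\eqref{chi-1} of $\delta$. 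Extending $m_0$ to the remaining six circles by the symmetries $m_0(\mu)=\overline{m_0(1/\bar\mu)}$ and $m_0(\mu)=\sigma_3\overline{m_0(-\bar\mu)}\sigma_3$, and invoking the small-norm representation \eqref{hat-m} of Step~(iv), the leading contribution to $\hat m$ comes from the integrals over the eight circles. Evaluating these by residues at $\mu=0$, $\mu=\ii$, and in the Taylor coefficient at $\mu=\mu_0$, and summing the eight symmetric pieces as in \eqref{mu0-as}--\eqref{mu1-as}, gives \eqref{M-R-as-1}: two copies of the single-stationary-point formula from the range $0<\xi<2$, one attached to $\mu_0$ and one to $-\mu_1$; here one must keep track of the signs, since $f_1(\kappa_1)<0$ while $f_0(\kappa_0)>0$, which accounts for the absolute values $|3-4\kappa_j^2|$ and the factors $(-1)^j$ appearing in \eqref{C1-4-01}.

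The remaining steps are mechanical. I would feed \eqref{M-R-as-1} into $M=\left(I-\tfrac{1}{\mu}\sigma_1\right)^{-1}\left(I-\tfrac{1}{\mu}\Delta\right)M^R$ with $\Delta=\sigma_1[M^R(y,t,0)]^{-1}$ (Proposition~\ref{p2}), expand at $\mu=\ii$, extract $a_1,a_2,a_3$ via \eqref{hat-M-i}, and use \eqref{u-and-x} to obtain the parametric representation \eqref{u-x-as-1} of $\hat u(y,t)$ and $x(y,t)$, where now $\delta(\ii,\xi)$ is given by \eqref{delta-i-1}; this yields \eqref{as-hat-u-1}--\eqref{C1-4-01} in the $(y,t)$ variables. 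To pass to the physical variable $\zeta=x/t$ I would use $y/t=\zeta-y_{01}(\xi)/t+\ord(t^{-3/2})$, which shifts the phase of the $j$-th term by $-(C_2^{(j)})'(\zeta)\,y_{01}(\zeta)$; computing $(C_2^{(j)})'(\zeta)=(-1)^{j+1}2\kappa_j$ from \eqref{C2-01} together with $\zeta=\frac{2-8\kappa_j^2}{(1+4\kappa_j^2)^2}$ gives the replacement \eqref{C4-1-01} of $C_4^{(j)}$ by $\tilde C_4^{(j)}$, and collecting the two terms produces the asserted formula.

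The main obstacle is not conceptual but one of bookkeeping and of uniformity at the endpoints of the sector. The bookkeeping consists in correctly attributing to each of the eight critical points its contribution to $\delta(\mu,\xi)$ near each of the two active points --- the cross-terms $\left(\tfrac{\kappa_1+\kappa_0}{\kappa_1-\kappa_0}\right)^{2\ii h_{1-j}}$ in \eqref{B-mu-01} and the corresponding terms $2h_{1-j}\ln\tfrac{\kappa_1+\kappa_0}{\kappa_1-\kappa_0}$ in \eqref{C4-01} --- and in keeping all signs straight because $f_1<0$. The more delicate point is that the $\osmall(t^{-1/2})$ error is uniform only on compact subsets $-\tfrac{1}{4}+\varepsilon<\zeta<-\varepsilon$: as $\zeta\to0^-$ one has $\mu_1\to+\infty$ (one active point escapes to infinity), and as $\zeta\to-\tfrac{1}{4}$ the stationary points $\kappa_0$ and $\kappa_1$ coalesce (with $3-4\kappa_j^2\to0$), so both the parabolic-cylinder approximation and the separation of the small circles degenerate there and a separate merging-saddle analysis would be required; for $\zeta$ bounded away from $0$ and $-\tfrac{1}{4}$ the error bounds of \cite{L15} apply as in the range $0<\xi<2$.
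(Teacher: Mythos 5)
Your proposal follows essentially the same route as the paper: two parabolic-cylinder parametrices at $\mu_0$ and $-\mu_1$ with the conjugating coefficients \eqref{d-mu-01} (including the cross-factors $\bigl(\tfrac{\kappa_1+\kappa_0}{\kappa_1-\kappa_0}\bigr)^{\ii h_{1-j}}$ from \eqref{delta-mu-1}--\eqref{chi-1}), extension to the remaining circles by the symmetries, summation of the residue contributions to get \eqref{M-R-as-1}, reconstruction of $\hat u$ and $x(y,t)$ via \eqref{u-x-as-1}, and the phase shift $-(C_2^{(j)})'(\zeta)\,y_{01}(\zeta)$ with $(C_2^{(j)})'=(-1)^{j+1}2\kappa_j$ yielding \eqref{C4-1-01}. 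The sign bookkeeping ($f_1<0$, the factors $(-1)^j$ and $|3-4\kappa_j^2|$) and the remark on uniformity only for $-\tfrac14+\varepsilon<\zeta<-\varepsilon$ also match the paper, so I see no gap.
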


Using again \eqref{utilde} we obtain, as a corollary, the large-time asymptotics of $u(x,t)$ in the sector $\frac{3}{4}<\frac{x}{t}<1$.

\begin{theorem}[$2^{\text{nd}}$ oscillatory region]\label{th4}
In the solitonless case, the solution $u(x,t)$ of the Cauchy problem \eqref{mCH1-ic} for the mCH equation has the following large-time asymptotics along the rays $\frac{x}{t}\eqqcolon\zeta$ in the sector of the $(x,t)$ half-plane defined by $\frac{3}{4}<\zeta<1$:
\[
u(x,t)=1+\sum_{j=0,1}\frac{C_1^{(j)}(\zeta-1)}{\sqrt{t}}\cos\left\{C_2^{(j)}(\zeta-1) t + C_3^{(j)}(\zeta-1)\ln t + \tilde C_4^{(j)}(\zeta-1)\right\}+\osmall(t^{-1/2}).
\]
The error term is uniform in any sector $\frac{3}{4}+\varepsilon<\zeta<1-\varepsilon$ where $\varepsilon$ is small and positive.
\end{theorem}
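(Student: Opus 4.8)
The plan is to obtain Theorem~\ref{th4} as an immediate corollary of Theorem~\ref{th3}, via the change of variables \eqref{utilde}. Recall that $u(x,t)=\tilde u(x-t,t)+1$, so evaluating the large-time asymptotics of $u$ along a ray $\frac{x}{t}=\zeta$ amounts to evaluating the asymptotics of $\tilde u$ at the spatial point $x-t$ and time $t$; along the ray $\frac{x}{t}=\zeta$, the corresponding ray parameter for $\tilde u$ is $\frac{x-t}{t}=\zeta-1$.

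First I would observe that the affine map $\zeta\mapsto\zeta-1$ carries the sector $\frac{3}{4}<\zeta<1$ bijectively onto the sector $-\frac{1}{4}<\zeta-1<0$, which is precisely the range of validity of Theorem~\ref{th3}. Applying Theorem~\ref{th3} with $x$ replaced by $x-t$ (equivalently, with ray parameter $\zeta-1$) then yields the asymptotics of $\tilde u(x-t,t)$ as a sum of two modulated, $t^{-1/2}$-decaying oscillatory terms with coefficients $C_1^{(j)}(\zeta-1),C_2^{(j)}(\zeta-1),C_3^{(j)}(\zeta-1)$ and phase corrections $\tilde C_4^{(j)}(\zeta-1)$, $j=0,1$, up to an $\osmall(t^{-1/2})$ error. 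Adding $1$, as dictated by \eqref{utilde}, gives the asserted formula for $u(x,t)$. The uniformity of the error on any subsector $\frac{3}{4}+\varepsilon<\zeta<1-\varepsilon$ transfers verbatim from the uniformity of the error in Theorem~\ref{th3} on $-\frac{1}{4}+\varepsilon<\zeta<-\varepsilon$, since the shift maps one onto the other.

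I do not expect a genuine obstacle here, as the entire substance of the result already resides in Theorem~\ref{th3} and the reduction preceding it. The only point worth a brief remark is that the spatial shift $x\mapsto x-t$ produces no additional $t$-dependent phase: one substitutes the shifted spatial variable directly into the parametric asymptotics of Theorem~\ref{th3}, so the linear-in-$t$ term is already $C_2^{(j)}(\zeta-1)\,t$ with the ray parameter $\zeta-1$, and nothing further is needed.
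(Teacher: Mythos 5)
Your proposal is correct and matches the paper's own derivation: Theorem~\ref{th4} is obtained there exactly as you describe, by applying \eqref{utilde} to Theorem~\ref{th3}, i.e., substituting $x-t$ for the spatial variable so that the ray parameter becomes $\zeta-1$, which carries $\frac{3}{4}<\zeta<1$ onto $-\frac{1}{4}<\zeta-1<0$, and adding $1$; the uniformity of the error transfers as you state. No gap.
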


\begin{figure}[ht]
\centering\includegraphics[scale=.9]{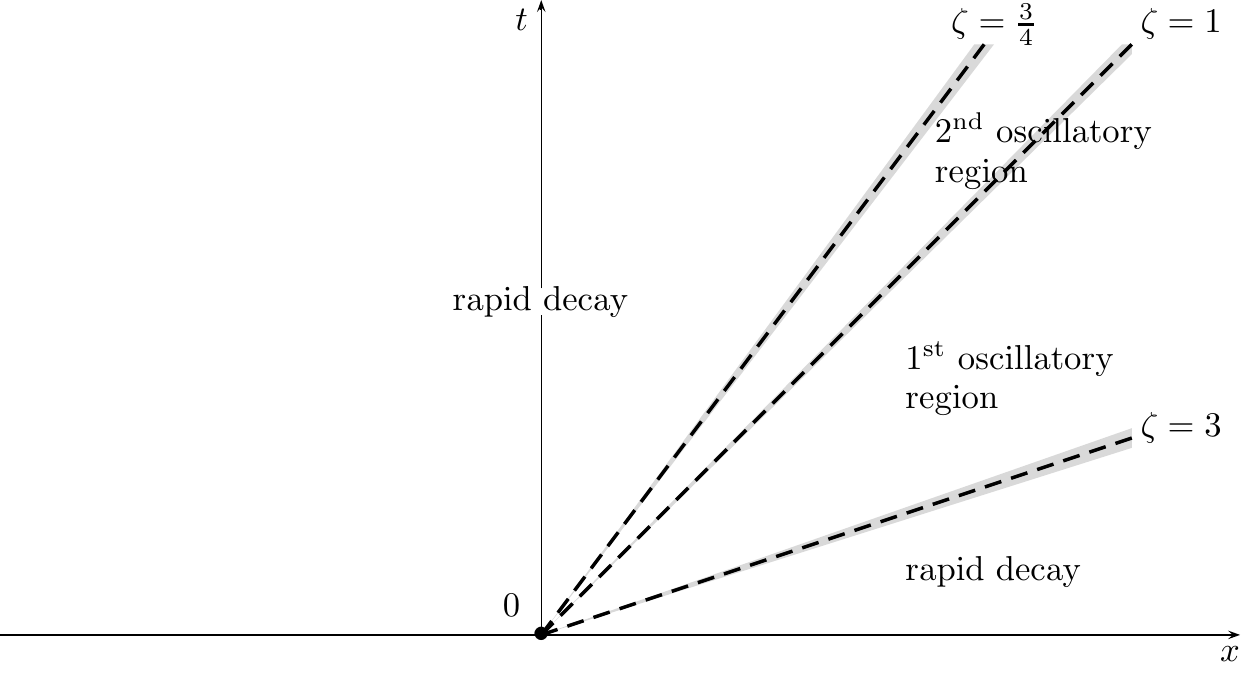}
\caption{Asymptotics for $u(x,t)$ according to $\zeta\coloneqq\frac{x}{t}$: the four regions.}
\label{f3}
\end{figure}

\begin{remark}
In the solitonless case, $u(x,t)$ decays rapidly to $0$ in the sectors $\frac{x}{t}>3$ and $\frac{x}{t}<\frac{3}{4}$, cf.~\cite{BS08-2}. This is due to the fact that for these ranges of values of $\frac{x}{t}$, $\theta(\mu,\xi)$ has no real stationary points (lying on the contour of the original RH problem).
\end{remark}

\begin{remark}
Transitions between the sectors (i.e., for $\frac{x}{t}$ near $\frac{3}{4}$ and $3$) are characterized by the merging of real stationary points of $\theta(\mu,\xi)$, which requires the use of different scalings of the spectral parameter. In analogy with the case of the Camassa--Holm equation (see \cite{BIS10}), one can expect that the asymptotics in the transition zones can be given in terms of Painlev\'e transcendents \cite{BKS20-2}.
\end{remark}

\begin{bibdiv}
\begin{biblist}
\bib{AK18}{article}{
   author={Anco, Stephen},
   author={Kraus, Daniel},
   title={Hamiltonian structure of peakons as weak solutions for the
   modified Camassa-Holm equation},
   journal={Discrete Contin. Dyn. Syst.},
   volume={38},
   date={2018},
   number={9},
   pages={4449--4465},
}
\bib{BIS10}{article}{
   author={Boutet de Monvel, Anne},
   author={Its, Alexander},
   author={Shepelsky, Dmitry},
   title={Painlev\'{e}-type asymptotics for the Camassa--Holm equation},
   journal={SIAM J. Math. Anal.},
   volume={42},
   date={2010},
   number={4},
   pages={1854--1873},
}
\bib{BKS20}{article}{
   author={Boutet de Monvel, Anne},
   author={Karpenko, Iryna},
   author={Shepelsky, Dmitry},
   title={A Riemann-Hilbert approach to the modified Camassa--Holm equation
   with nonzero boundary conditions},
   journal={J. Math. Phys.},
   volume={61},
   date={2020},
   number={3},
   pages={031504, 24},
}
\bib{BKS20-2}{article}{
   author={Boutet de Monvel, Anne},
   author={Karpenko, Iryna},
   author={Shepelsky, Dmitry},
   title={Painlev\'{e}-type asymptotics for the modified Camassa--Holm equation},
   status={in preparation},
}
\bib{BKST09}{article}{
   author={Boutet de Monvel, Anne},
   author={Kostenko, Aleksey},
   author={Shepelsky, Dmitry},
   author={Teschl, Gerald},
   title={Long-time asymptotics for the Camassa--Holm equation},
   journal={SIAM J. Math. Anal.},
   volume={41},
   date={2009},
   number={4},
   pages={1559--1588},
}
\bib{BS08}{article}{
   author={Boutet de Monvel, Anne},
   author={Shepelsky, Dmitry},
   title={Riemann-Hilbert problem in the inverse scattering for the
   Camassa--Holm equation on the line},
   conference={
      title={Probability, geometry and integrable systems},
   },
   book={
      series={Math. Sci. Res. Inst. Publ.},
      volume={55},
      publisher={Cambridge Univ. Press, Cambridge},
   },
   date={2008},
   pages={53--75},
}
\bib{BS08-2}{article}{
   author={Boutet de Monvel, Anne},
   author={Shepelsky, Dmitry},
   title={Long-time asymptotics of the Camassa--Holm equation on the line},
   conference={
      title={Integrable systems and random matrices},
   },
   book={
      series={Contemp. Math.},
      volume={458},
      publisher={Amer. Math. Soc., Providence, RI},
   },
   date={2008},
   pages={99--116},
}
\bib{BS09}{article}{
   author={Boutet de Monvel, Anne},
   author={Shepelsky, Dmitry},
   title={Long time asymptotics of the Camassa--Holm equation on the
   half-line},
   journal={Ann. Inst. Fourier (Grenoble)},
   volume={59},
   date={2009},
   number={7},
   pages={3015--3056},
}
\bib{CH93}{article}{
   author={Camassa, Roberto},
   author={Holm, Darryl D.},
   title={An integrable shallow water equation with peaked solitons},
   journal={Phys. Rev. Lett.},
   volume={71},
   date={1993},
   number={11},
   pages={1661--1664},
}
\bib{CHH94}{article}{
   author={Camassa, Roberto},
   author={Holm, Darryl D.},
   author={Hyman, James M.},
   title={A new integrable shallow water equation},
   journal={Adv. Appl. Mech.},
   volume={31},
   date={1994},
   number={1},
   pages={1--33},
}
\bib{CS17}{article}{
   author={Chang, Xiangke},
   author={Szmigielski, Jacek},
   title={Liouville integrability of conservative peakons for a modified CH
   equation},
   journal={J. Nonlinear Math. Phys.},
   volume={24},
   date={2017},
   number={4},
   pages={584--595},
}
\bib{CS18}{article}{
   author={Chang, Xiangke},
   author={Szmigielski, Jacek},
   title={Lax integrability and the peakon problem for the modified
   Camassa-Holm equation},
   journal={Comm. Math. Phys.},
   volume={358},
   date={2018},
   number={1},
   pages={295--341},
}
\bib{CGLQ16}{article}{
   author={Chen, Robin Ming},
   author={Guo, Fei},
   author={Liu, Yue},
   author={Qu, Changzheng},
   title={Analysis on the blow-up of solutions to a class of integrable
   peakon equations},
   journal={J. Funct. Anal.},
   volume={270},
   date={2016},
   number={6},
   pages={2343--2374},
}
\bib{CLQZ15}{article}{
   author={Chen, Robin Ming},
   author={Liu, Yue},
   author={Qu, Changzheng},
   author={Zhang, Shuanghu},
   title={Oscillation-induced blow-up to the modified Camassa-Holm equation
   with linear dispersion},
   journal={Adv. Math.},
   volume={272},
   date={2015},
   pages={225--251},
}
\bib{CL09}{article}{
   author={Constantin, Adrian},
   author={Lannes, David},
   title={The hydrodynamical relevance of the Camassa--Holm and
   Degasperis-Procesi equations},
   journal={Arch. Ration. Mech. Anal.},
   volume={192},
   date={2009},
   number={1},
   pages={165--186},
}
\bib{DZ93}{article}{
   author={Deift, P.},
   author={Zhou, X.},
   title={A steepest descend method for oscillatory 
   Riemann--Hilbert problems. Asymptotics for the MKdV equation},
   journal={Ann. Math.},
   volume={137},
   date={1993},
   number={2},
   pages={295--368},
}
\bib{E19}{article}{
   author={Eckhardt, Jonathan},
   title={Unique solvability of a coupling problem for entire functions},
   journal={Constr. Approx.},
   volume={49},
   date={2019},
   number={1},
   pages={123--148},
}
\bib{ET13}{article}{
   author={Eckhardt, Jonathan},
   author={Teschl, Gerald},
   title={On the isospectral problem of the dispersionless Camassa-Holm
   equation},
   journal={Adv. Math.},
   volume={235},
   date={2013},
   pages={469--495},
}
\bib{ET16}{article}{
   author={Eckhardt, Jonathan},
   author={Teschl, Gerald},
   title={A coupling problem for entire functions and its application to the
   long-time asymptotics of integrable wave equations},
   journal={Nonlinearity},
   volume={29},
   date={2016},
   number={3},
   pages={1036--1046},
   issn={0951-7715},
   review={\MR{3465992}},
   doi={10.1088/0951-7715/29/3/1036},
}
\bib{F95}{article}{
   author={Fokas, A. S.},
   title={On a class of physically important integrable equations},
   note={The nonlinear Schr\"{o}dinger equation (Chernogolovka, 1994)},
   journal={Phys. D},
   volume={87},
   date={1995},
   number={1-4},
   pages={145--150},
}
\bib{FGLQ13}{article}{
   author={Fu, Ying},
   author={Gui, Guilong},
   author={Liu, Yue},
   author={Qu, Changzheng},
   title={On the Cauchy problem for the integrable modified Camassa-Holm
   equation with cubic nonlinearity},
   journal={J. Differential Equations},
   volume={255},
   date={2013},
   number={7},
   pages={1905--1938},
}
\bib{Fu96}{article}{
   author={Fuchssteiner, Benno},
   title={Some tricks from the symmetry-toolbox for nonlinear equations:
   generalizations of the Camassa--Holm equation},
   journal={Phys. D},
   volume={95},
   date={1996},
   number={3-4},
   pages={229--243},
}
\bib{GL18}{article}{
   author={Gao, Yu},
   author={Liu, Jian-Guo},
   title={The modified Camassa-Holm equation in Lagrangian coordinates},
   journal={Discrete Contin. Dyn. Syst. Ser. B},
   volume={23},
   date={2018},
   number={6},
   pages={2545--2592},
}
\bib{GLOQ13}{article}{
   author={Gui, Guilong},
   author={Liu, Yue},
   author={Olver, Peter J.},
   author={Qu, Changzheng},
   title={Wave-breaking and peakons for a modified Camassa-Holm equation},
   journal={Comm. Math. Phys.},
   volume={319},
   date={2013},
   number={3},
   pages={731--759},
}
\bib{HFQ17}{article}{
   author={Hou, Yu},
   author={Fan, Engui},
   author={Qiao, Zhijun},
   title={The algebro-geometric solutions for the Fokas-Olver-Rosenau-Qiao
   (FORQ) hierarchy},
   journal={J. Geom. Phys.},
   volume={117},
   date={2017},
   pages={105--133},
}
\bib{IU86}{article}{
   author={Its, A. R.},
   author={Ustinov, A. F.},
   title={Time asymptotics of the solution of the Cauchy problem for the
   nonlinear Schr\"{o}dinger equation with boundary conditions of finite density
   type},
   language={Russian},
   journal={Dokl. Akad. Nauk SSSR},
   volume={291},
   date={1986},
   number={1},
   pages={91--95},
   translation={
      journal={Soviet Phys. Dokl.},
      volume={31},
      date={1986},
      number={11},
      pages={893–-895},
   },
}
\bib{J02}{article}{
   author={Johnson, R. S.},
   title={Camassa--Holm, Korteweg--de Vries and related models for water
   waves},
   journal={J. Fluid Mech.},
   volume={455},
   date={2002},
   pages={63--82},
}
\bib{K16}{article}{
   author={Kang, Jing},
   author={Liu, Xiaochuan},
   author={Olver, Peter J.},
   author={Qu, Changzheng},
   title={Liouville correspondence between the modified KdV hierarchy and
   its dual integrable hierarchy},
   journal={J. Nonlinear Sci.},
   volume={26},
   date={2016},
   number={1},
   pages={141--170},
}
\bib{L15}{article}{
   author={Lenells, Jonatan},
   title={The nonlinear steepest descent method for Riemann-Hilbert problems
   of low regularity},
   journal={Indiana Univ. Math. J.},
   volume={66},
   date={2017},
   number={4},
   pages={1287--1332},
}
\bib{LLOQ14}{article}{
   author={Liu, Xiaochuan},
   author={Liu, Yue},
   author={Olver, Peter J.},
   author={Qu, Changzheng},
   title={Orbital stability of peakons for a generalization of the modified
   Camassa-Holm equation},
   journal={Nonlinearity},
   volume={27},
   date={2014},
   number={9},
   pages={2297--2319},
}
\bib{LOQZ14}{article}{
   author={Liu, Yue},
   author={Olver, Peter J.},
   author={Qu, Changzheng},
   author={Zhang, Shuanghu},
   title={On the blow-up of solutions to the integrable modified
   Camassa-Holm equation},
   journal={Anal. Appl. (Singap.)},
   volume={12},
   date={2014},
   number={4},
   pages={355--368},
}
\bib{MN02}{article}{
   author={Mikhailov, A. V.},
   author={Novikov, V. S.},
   title={Perturbative symmetry approach},
   journal={J. Phys. A},
   volume={35},
   date={2002},
   number={22},
   pages={4775--4790},
}
\bib{N09}{article}{
   author={Novikov, Vladimir},
   title={Generalizations of the Camassa--Holm equation},
   journal={J. Phys. A},
   volume={42},
   date={2009},
   number={34},
   pages={342002, 14},
}
\bib{OR96}{article}{
   author={Olver, P. J.},
   author={Rosenau, P.},
   title={Tri-hamiltonian duality between solitons and solitary-wave
solutions having compact support},
   journal={Phys. Rev. E},
   volume={53},
   date={1996},
   number={2},
   pages={1900},
}
\bib{Q06}{article}{
   author={Qiao, Zhijun},
   title={A new integrable equation with cuspons and W/M-shape-peaks
   solitons},
   journal={J. Math. Phys.},
   volume={47},
   date={2006},
   number={11},
   pages={112701, 9},
}
\bib{QLL13}{article}{
   author={Qu, Changzheng},
   author={Liu, Xiaochuan},
   author={Liu, Yue},
   title={Stability of peakons for an integrable modified Camassa-Holm
   equation with cubic nonlinearity},
   journal={Comm. Math. Phys.},
   volume={322},
   date={2013},
   number={3},
   pages={967--997},
}
\bib{S96}{article}{
   author={Schiff, Jeremy},
   title={Zero curvature formulations of dual hierarchies},
   journal={J. Math. Phys.},
   volume={37},
   date={1996},
   number={4},
   pages={1928--1938},
}
\bib{V00}{article}{
   author={Vartanian, A. H.},
   title={Large-time continuum asymptotics of dark solitons},
   journal={Inverse Problems},
   volume={16},
   date={2000},
   number={4},
   pages={L39--L46},
}
\bib{V03}{article}{
   author={Vartanian, A. H.},
   title={Exponentially small asymptotics of solutions to the defocusing
   nonlinear Schr\"{o}dinger equation},
   journal={Appl. Math. Lett.},
   volume={16},
   date={2003},
   number={3},
   pages={425--434},
}
\bib{WLM20}{article}{
   author={Wang, Gaihua},
   author={Liu, Q.P.},
   author={Mao, Hui},
   title={The modified Camassa-Holm equation: Bäcklund transformation and
   nonlinear superposition formula},
   journal={J. Phys. A},
   volume={53},
   date={2020},
   number={29},
   pages={294003--294018},
}
\bib{YQZ18}{article}{
   author={Yan, Kai},
   author={Qiao, Zhijun},
   author={Zhang, Yufeng},
   title={On a new two-component $b$-family peakon system with cubic
   nonlinearity},
   journal={Discrete Contin. Dyn. Syst.},
   volume={38},
   date={2018},
   number={11},
   pages={5415--5442},
}
\end{biblist}
\end{bibdiv}
\end{document}